\numberwithin{equation}{section}
\newcommand{\PR}{\mathbb P}
\newcommand{\G}{\mathcal G}
\theoremstyle{plain}
\newtheorem{theorem}{Theorem}[section]
\newtheorem{remark}{Remark}
\begin{document}

\begin{frontmatter}

\title{Asymptotics for sparse exponential random graph models}
\runtitle{Asymptotics for sparse ERGMs}

\begin{aug}
\author{\fnms{Mei}
\snm{Yin}\thanksref{t1}\ead[label=e1]{mei.yin@du.edu}}
\and
\author{\fnms{Lingjiong}
\snm{Zhu}\thanksref{t2}\ead[label=e2]{ling@cims.nyu.edu}}

\runauthor{Mei Yin and Lingjiong Zhu}

\affiliation{University of Denver\thanksmark{t1} and University of Minnesota\thanksmark{t2}}

\address{Department of Mathematics\\
University of Denver\\
\printead{e1}\\
\phantom{E-mail:\ }}

\address{School of Mathematics\\
University of Minnesota\\
\printead{e2}\\
\phantom{E-mail:\ }}
\end{aug}

\begin{abstract}
We study the asymptotics for sparse exponential random graph models where the parameters may depend on the number
of vertices of the graph. We obtain exact estimates for the mean and variance of the limiting probability distribution and the limiting log partition function of the edge-(single)-star model. They are in sharp contrast to the corresponding asymptotics in dense exponential random graph models. Similar analysis is done for directed sparse exponential random graph models parametrized by edges and multiple outward stars.
\end{abstract}

\begin{keyword}[class=AMS]
\kwd[Primary ]{05C80} \kwd[; secondary ]{60C05} \kwd{05C35}
\end{keyword}

\begin{keyword}
\kwd{sparse random graphs} \kwd{exponential random graphs} \kwd{asymptotics}
\end{keyword}

\end{frontmatter}

\section{Introduction}
\label{intro}
Exponential random graphs are a class of graph ensembles of fixed vertex number $n$
defined by analogy with the Boltzmann ensemble of statistical mechanics.
Let $\{\epsilon_p\}$ be a set of local features of a single graph,
for example the number of edges or copies of any finite subgraph,
as well as more complicated characteristics including the degree sequence or degree distribution, and combinations thereof.
These quantities play a role similar to energy in statistical mechanics.
Let $\{\beta_p\}$ be a set of inverse temperature parameters whose values we are free to choose.
By varying these parameters, one could analyze the influence of different local features on the global structure of the graph.
Let $\G_n$ be the set of all possible graphs (undirected and with no self-loops or multiple edges in the simplest case) on $n$ vertices.
The $k$-parameter family of exponential random graphs is defined by assigning a probability $\PR^{(n)}(G_n)$ to every graph $G_n$ in $\G_n$:
\begin{equation}
\PR^{(n)}(G_n)=Z_n(\beta_{1}^{(n)},\beta_{2}^{(n)},\ldots,\beta_{k}^{(n)})^{-1}\exp\left[\sum_{p=1}^k \beta_p^{(n)}\epsilon_p(G_n)\right],
\end{equation}
where $Z_n$ is the partition function,
\begin{equation}
Z_n(\beta_{1}^{(n)},\beta_{2}^{(n)},\ldots,\beta_{k}^{(n)})=\sum_{G_n\in \G_n}\exp\left[\sum_{p=1}^k \beta_p^{(n)}\epsilon_p(G_n)\right].
\end{equation}

These rather general models are widely used to model real-world networks, such as the Internet,
the World Wide Web, social networks, and biological networks, as they are able to capture a wide variety of common network
tendencies by representing a complex global structure through a set of tractable local features,
see e.g. Newman \cite{Newman} and Wasserman and Faust \cite{Wasserman}.
They are particularly useful when one wants to construct models that resemble observed networks as closely as possible
but without going into details of the specific process underlying network formation. Since real-world networks are often very large in size, a pressing objective is to understand the asymptotics of the mean and variance of the limiting probability distribution $\PR^{(n)}$ and the limiting log partition function $\log Z_n$.
By differentiating $\log Z_n$ with respect to appropriate parameters $\beta_p$, averages of various quantities of interest may be derived.
In particular, a phase transition occurs when $\log Z_n$ is non-analytic, since it is the generating function for the limiting expectations of other random variables.
Computation of $\log Z_n$ is also important in statistics because it is crucial
for carrying out maximum likelihood estimates and Bayesian inference of unknown parameters.

Exponential models described above have been extensively studied over the last decades. We refer to Besag \cite{Besag},
Snijders et al. \cite{Snijders}, Rinaldo et al. \cite{Rinaldo}, van der Hofstad \cite{Hofstad}, and Fienberg \cite{FienbergI, FienbergII} for history and a review of developments.
The past few years especially has witnessed (exponentially) growing
attention in exponential models and their variations. Many investigations have been centered on dense graphs (number of edges comparable to the square of number of vertices), and emphasis has been made on the variational principle of the limiting free energy, concentration of the limiting probability distribution,
phase transitions and asymptotic structures. See e.g. Chatterjee and Varadhan \cite{ChatterjeeVaradhan}, Chatterjee and Diaconis \cite{ChatterjeeDiaconis},
Radin and Yin \cite{Radin}, Lubetzky and Zhao \cite{LubetzkyZhao}, Radin and Sadun \cite{RadinII, RadinIV}, Radin et al. \cite{RadinIII},
Kenyon et al. \cite{Kenyon}, Yin \cite{Yin}, Yin et al. \cite{YinII}, Kenyon and Yin \cite{KenyonYin}, Aristoff and Zhu \cite{AristoffZhu, AristoffZhuII}, and Zhu \cite{Zhu}. Most of these papers utilize the theory of graph limits as developed by Lov\'{a}sz and coauthors (V. T. S\'{o}s, B. Szegedy, C. Borgs, J. Chayes, K. Vesztergombi, etc.), who have constructed this unified and elegant theory in a sequence of
papers \cite{BCLSV1, BCLSV2, BCLSV3, Lovasz2009, LS}. See the book by Lov\'{a}sz
\cite{Lov} for a comprehensive account and references.

Although the graph limit theory is tailored to dense graphs, parallel theories for sparse graphs are likewise emerging. See e.g. Benjamini and Schramm \cite{BS} and Aldous and Lyons \cite{AL} where the notion of local weak convergence is discussed, which is relevant to the sparse graph setting since the vast majority of sparse graphs are locally tree-like.
See also the new works of Borgs et al. \cite{BCCZ1, BCCZ2} that are making progress towards
generalizing the existing $L^\infty$ theory of dense graph limits by developing a limiting object for sparse graph sequences based on $L^p$ graphons. Their interest in sparse graphs is well justified, as most networks data are sparse in the real world. Biomedical signals tend to have sparse depictions when expressed in a proper basis; A gene network is sparse since a regulatory pathway involves only a small number of genes; The neural representation of sounds
in the auditory cortex of unanesthetized animals is sparse since the fraction of neurons active at a given
instant is small. More examples may be found in Golub et al. \cite{Golub}, Guyon et al. \cite{Guyon}, Hrom\'{a}dka et al. \cite{Hromadka},
and Ye and Liu \cite{YeLiu}. The present investigation will focus on sparse exponential random graph models.
They are indeed harder to study than their dense counterparts, nevertheless, some important discoveries have recently been
made by Chatterjee and Dembo \cite{ChatterjeeDembo}.

The rest of this paper is organized as follows. In Section \ref{undirected} we analyze the asymptotic features of the undirected exponential model parametrized by various subgraph densities and derive exact estimates for the mean and variance of the limiting probability distribution (Theorems \ref{meanundir} and \ref{varianceundir}) and the limiting log partition function of the edge-(single)-star model (Theorem \ref{sparse}) under a sparsity assumption about the parameters. In Section \ref{directed} we analyze the asymptotic features of the
directed exponential model parametrized by edges and multiple outward stars and derive exact estimates for the mean and variance of the limiting probability distribution (Theorems \ref{meandir}, \ref{variancedir}, \ref{caselambda} and \ref{infdir}) and the limiting log partition function (Remark \ref{remark}) under different sparsity assumptions about the parameters.

\section{Undirected Graphs}
\label{undirected}
Consider undirected graphs $G_n$ on $n$ vertices, where a graph is represented by a matrix
$X_n=(X_{ij})_{1\le i<j \le n}$ with each $X_{ij} \in \{0,1\}$.
Here, $X_{ij} = 1$ means there is an edge between vertex $i$ and
vertex $j$; otherwise, $X_{ij}=0$. Give the set of such graphs the probability
\begin{equation}\label{ll}
{\mathbb P}^{(n)}(G_n) = Z_n(\beta_{1}^{(n)},\beta_{2}^{(n)},\ldots,\beta_{k}^{(n)})^{-1}\exp\left[n^2\left(\sum_{p=1}^{k}\beta_{p}^{(n)} t(H_p, G_n)\right)\right],
\end{equation}
where $H_p$ is a finite simple graph with vertex set $V(H_p)=\{1,\ldots,v(H_p)\}$ and edge set $E(H_p)$, $t(H_p, G_n)$ is the homomorphism density of $H_p$ in $G_n$, i.e., the probability that a random vertex map $V(H_p) \rightarrow V(G_n)$ is edge-preserving,
\begin{equation}
t(H_{p},G_{n})=\frac{|\mbox{hom}(H_{p},G_{n})|}{|V(G_n)|^{|V(H_{p})|}},
\end{equation}
and $Z_n(\beta_{1}^{(n)},\beta_{2}^{(n)},\ldots,\beta_{k}^{(n)})$ is the appropriate normalization. The parameters $\beta_p^{(n)}$ are scaled according to the number
of vertices of the graph,
\begin{equation}
\label{scaling1}
\beta_{p}^{(n)}=\beta_{p}\alpha_{n},\quad p=1,2,\ldots,k,
\end{equation}
where $\alpha_{n}\rightarrow\infty$ as $n\rightarrow\infty$. We are interested in the situation where a typical random graph sampled from the exponential model $\PR^{(n)}$ (\ref{ll}) is sparse, i.e., the probability that there is an edge between vertex $i$ and vertex $j$ goes to $0$ as $n\rightarrow\infty$. One natural question to ask is when this indeed happens, what is the speed of the graph towards sparsity? This in general is a rather difficult question, nevertheless when the parameters $\beta_i$ are all negative, we are able to provide some concrete answers. The following Theorems \ref{meanundir} and \ref{varianceundir} give the mean and variance of the limiting probability distribution ${\mathbb P}^{(n)}$ under a sparsity assumption about the parameters.

\begin{theorem}
\label{meanundir}
Assume that $\beta_{1},\ldots,\beta_{k}$ are all negative and $H_{1}$ denotes a single edge. Let us further
assume that $\lim_{n\rightarrow\infty}n^{2}e^{2\alpha_{n}\beta_{1}}=0$ and \\ $\lim_{n\rightarrow\infty}\frac{\alpha_n}{n}=0$. Then
\begin{equation}
\lim_{n\rightarrow\infty}\frac{\mathbb{P}^{(n)}(X_{1i}=1)}{e^{2\alpha_{n}\beta_1}}=1, \hspace{0.2cm} i\neq 1.
\end{equation}
\end{theorem}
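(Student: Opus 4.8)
By the permutation symmetry of the model it suffices to treat $i=2$. Write $S(X)=\sum_{i<j}X_{ij}$ for the number of edges, and observe that for the single edge $H_1$ one has $n^2\beta_1^{(n)}t(H_1,X)=2\alpha_n\beta_1 S(X)$, so I would split the Hamiltonian as $T(X)=2\alpha_n\beta_1 S(X)+R(X)$ with $R(X)=\sum_{p=2}^k n^2\beta_p\alpha_n\,t(H_p,X)$. Since every $\beta_p<0$ and $t(H_p,\cdot)\ge0$, we have $R\le0$. For a configuration $Y$ with $Y_{12}=0$ let $Y^{+}$ be the same configuration with the edge $\{1,2\}$ inserted; then $T(Y^{+})-T(Y)=2\alpha_n\beta_1+\Delta(Y)$, where $\Delta(Y):=R(Y^{+})-R(Y)\le0$ because homomorphism densities are monotone under edge addition and the $\beta_p$ are negative. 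Grouping $\G_n$ into the pairs $(Y,Y^{+})$ and dividing by the restricted partition function $\sum_{Y:\,Y_{12}=0}e^{T(Y)}$ gives the exact identity
\[
\frac{\PR^{(n)}(X_{12}=1)}{e^{2\alpha_n\beta_1}}=\frac{E_0\!\left[e^{\Delta(Y)}\right]}{1+e^{2\alpha_n\beta_1}E_0\!\left[e^{\Delta(Y)}\right]},
\]
where $E_0$ is expectation under the probability proportional to $e^{T(Y)}$ on $\{Y:Y_{12}=0\}$. Since $e^{2\alpha_n\beta_1}\to0$, the problem reduces to proving $E_0[e^{\Delta(Y)}]\to1$.

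From $\Delta\le0$ we get $e^{\Delta}\le1$, hence $E_0[e^{\Delta}]\le1$; from $e^{x}\ge1+x$ we get $E_0[e^{\Delta}]\ge1-E_0[-\Delta]$, so it is enough to show $E_0[-\Delta(Y)]\to0$. To make $E_0$ tractable I would dominate it by a sparse Erd\H{o}s--R\'enyi model. Because $R\le0$ we have $e^{T(Y)}\le e^{2\alpha_n\beta_1 S(Y)}=:w(Y)$, while the empty graph gives $\sum_{Y}e^{T(Y)}\ge1$; therefore
\[
E_0[-\Delta(Y)]\le\frac{\sum_Y w(Y)\,(-\Delta(Y))}{\sum_Y e^{T(Y)}}\le Z_{n}^{\mathrm{ER}}\,E_{\mathrm{ER}}[-\Delta(Y)],
\]
where $E_{\mathrm{ER}}$ is the expectation for independent edges of probability $p_n=e^{2\alpha_n\beta_1}/(1+e^{2\alpha_n\beta_1})\le e^{2\alpha_n\beta_1}$ with $Y_{12}=0$, and $Z_n^{\mathrm{ER}}=(1+e^{2\alpha_n\beta_1})^{\binom n2-1}$. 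The hypothesis $n^2e^{2\alpha_n\beta_1}\to0$ forces $\log Z_n^{\mathrm{ER}}\le\binom n2 e^{2\alpha_n\beta_1}\to0$, so $Z_n^{\mathrm{ER}}\to1$ and the task becomes $E_{\mathrm{ER}}[-\Delta(Y)]\to0$.

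The main work is this last estimate. Write $-\Delta(Y)=\sum_{p=2}^k n^2\alpha_n|\beta_p|\,n^{-v(H_p)}N_p(Y)$, where $N_p(Y)$ is the number of homomorphisms of $H_p$ into $Y^{+}$ that use the inserted edge $\{1,2\}$, and classify these homomorphisms $\phi$ by the set $W=\phi^{-1}(\{1,2\})\subseteq V(H_p)$. A type with $|W|=m$ has at most $n^{\,v(H_p)-m}$ free vertex choices, and the edges of $H_p$ leaving $W$ must be present in $Y$, contributing a factor $p_n^{\,r}$ with $r\ge1$ whenever $m=2$ (here I use that each $H_p$, $p\ge2$, may be taken connected with $|E(H_p)|\ge2$, so a vertex outside $W$ carries at least one required edge). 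Taking the ER expectation, the contribution of such a type to $E_{\mathrm{ER}}[-\Delta]$ is of order
\[
\alpha_n|\beta_p|\,n^{\,2-m}\,p_n^{\,r}.
\]
For $m\ge3$ this is $\le C\alpha_n n^{-1}\to0$ by the hypothesis $\alpha_n/n\to0$; for $m=2$ it is $\le C\alpha_n p_n\le C\alpha_n e^{2\alpha_n\beta_1}=o(\alpha_n n^{-2})=o(n^{-1})\to0$, using $\alpha_n/n\to0$ together with $n^2e^{2\alpha_n\beta_1}\to0$. As there are only finitely many types and finitely many $p$, summing gives $E_{\mathrm{ER}}[-\Delta(Y)]\to0$, hence $E_0[-\Delta]\to0$ and $E_0[e^{\Delta}]\to1$, which yields the claim.

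I expect the delicate point to be exactly the bookkeeping in the third paragraph, and in particular the \emph{degenerate} homomorphisms that collapse a large part of $H_p$ onto the single inserted edge (large $|W|$, with $r$ possibly $0$): these carry no powers of the tiny probability $p_n$ and are controlled solely by the factor $n^{2-m}$, which is precisely why the second hypothesis $\alpha_n/n\to0$ is needed in addition to $n^2e^{2\alpha_n\beta_1}\to0$. Verifying that the non-degenerate types are genuinely lower order, and that no type escapes one of these two smallness mechanisms, is the crux of the argument.
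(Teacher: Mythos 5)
Your proof is correct, but it takes a genuinely different route from the paper's. The paper writes $\mathbb{P}^{(n)}(X_{1i}=1)=\frac{1}{n-1}\mathbb{E}^{(n)}[\sum_{i=2}^n X_{1i}]$ as a ratio of two expectations under the uniform measure and sandwiches numerator and denominator separately: the lower bounds come from restricting to the empty graph (denominator) and to the single-edge configurations (numerator, where the correction $\alpha_n n^{2}\sum_{p\ge 2}\beta_p c_p n^{-v(H_p)}$ vanishes precisely because $\alpha_n/n\to 0$ and $v(H_p)\ge 3$), while the upper bounds come from discarding the terms $\sum_{p\ge2}\beta_p t(H_p,\cdot)$ altogether (legitimate since $\beta_p<0$ and $t\ge0$), after which everything is an explicit i.i.d.\ Bernoulli computation; the hypothesis $n^2e^{2\alpha_n\beta_1}\to0$ makes the two sides match. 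You instead derive an exact identity for the conditional odds of the single edge $\{1,2\}$ by the pairing $Y\leftrightarrow Y^{+}$, reduce the claim to $E_0[e^{\Delta}]\to1$, and control $E_0[-\Delta]$ by a first-moment homomorphism count under a dominating Erd\H{o}s--R\'enyi measure, classified by the preimage $W$ of $\{1,2\}$. I checked your bookkeeping and it is sound: the exact identity is right, the domination $E_0[-\Delta]\le Z_n^{\mathrm{ER}}E_{\mathrm{ER}}[-\Delta]$ uses the empty-graph lower bound on the restricted partition function, the degenerate types with $|W|=m\ge3$ contribute $O(\alpha_n n^{2-m})=O(\alpha_n/n)$, and the types with $m=2$ contribute $O(\alpha_n e^{2\alpha_n\beta_1})$, which is $o(1)$ by combining the two hypotheses; both proofs rest on the same implicit assumption that each $H_p$ with $p\ge2$ has $v(H_p)\ge3$ and no isolated vertices, which you make explicit by taking them connected. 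What your route buys is a stronger structural statement --- the conditional law of each edge given the rest is, on average, close to Bernoulli with parameter $e^{2\alpha_n\beta_1}/(1+e^{2\alpha_n\beta_1})$ --- and a clean separation of the roles of the two hypotheses. What the paper's route buys is economy: the identical sandwich template transfers verbatim to the second moment in Theorem \ref{varianceundir} by taking one more derivative, whereas your pairing would have to be iterated over two edge insertions.
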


\begin{proof}
By symmetry, the fact that $X_{1i}\in\{0,1\}$ and the definition of the probability measure $\mathbb{P}^{(n)}$ \eqref{ll},
\begin{align}
\mathbb{P}^{(n)}(X_{1i}=1)
&=\mathbb{E}^{(n)}[X_{1i}]
\\
&=\frac{1}{n-1}\mathbb{E}^{(n)}\left[\sum_{i=2}^nX_{1i}\right]
\nonumber
\\
&=\frac{1}{n-1}\frac{2^{\binom{n}{2}}\mathbb{E}[\sum_{i=2}^nX_{1i}e^{\alpha_{n}n^{2}\sum_{p=1}^{k}\beta_{p}t(H_{p},G_n)}]}
{2^{\binom{n}{2}}\mathbb{E}[e^{\alpha_{n}n^{2}\sum_{p=1}^{k}\beta_{p}t(H_{p},G_n)}]},
\nonumber
\end{align}
where $\mathbb{E}$ is the expectation associated with the uniform measure, i.e., each possible graph configuration is weighted equally at $1/2^{\binom{n}{2}}$.

\noindent First, let us analyze the denominator. On one hand, taking $G_n$ to be the empty graph gives $t(H_p, G_n)=0$ for any simple graph $H_p$, which implies that
\begin{equation}
2^{\binom{n}{2}}\mathbb{E}[e^{\alpha_{n}n^{2}\sum_{p=1}^{k}\beta_{p}t(H_{p},G_n)}]\geq 2^{\binom{n}{2}}\frac{1}{2^{\binom{n}{2}}}=1.
\end{equation}
On the other hand, since $\beta_{1},\ldots,\beta_{k}$ are all negative and $t(H_1, G_n)$ measures the edge density of $G_n$, using $(X_{ij})_{1\le i<j \le n}$ are iid Bernoulli under the uniform measure, we have
\begin{align}
2^{\binom{n}{2}}\mathbb{E}[e^{\alpha_{n}n^{2}\sum_{p=1}^{k}\beta_{p}t(H_{p},G_n)}]
&\leq 2^{\binom{n}{2}}\mathbb{E}[e^{\alpha_{n}n^{2}\beta_{1}t(H_{1},G_n)}]
\\
&=2^{\binom{n}{2}}\mathbb{E}[e^{2\alpha_{n}\beta_{1}\sum_{1\leq i<j\leq n}X_{ij}}]
\nonumber
\\
&=2^{\binom{n}{2}}\left(\frac{1+e^{2\alpha_{n}\beta_{1}}}{2}\right)^{\binom{n}{2}}
\nonumber
\\
&\rightarrow 1.
\nonumber
\end{align}
Next, let us analyze the numerator. On one hand, since $t(H_1, G_n)$ measures the edge density of $G_n$, a graph $G_n$ with only one edge $X_{1i}=1$ for some $2\leq i\leq n$ gives $t(H_1, G_n)=2/n^2$ and carries a weight of $1/2^{\binom{n}{2}}$, which implies that
\begin{align}
&2^{\binom{n}{2}}\mathbb{E}\left[\sum_{i=2}^nX_{1i}e^{\alpha_{n}n^{2}\sum_{p=1}^{k}\beta_{p}t(H_{p},G_n)}\right]
\\
&\geq
2^{\binom{n}{2}}\sum_{i=2}^n\mathbb{E}\left[e^{\alpha_{n}n^{2}\sum_{p=1}^{k}\beta_{p}t(H_{p},G_n)}\bigg|X_{1i}=1,X_{i'j'}=0,(i',j')\neq(1,i)\right]
\nonumber
\\
&\qquad\qquad\qquad\cdot
\mathbb{P}\left(X_{1i}=1,X_{i'j'}=0,(i',j')\neq(1,i)\right)
\nonumber
\\
&= e^{2\alpha_{n}\beta_{1}}\sum_{i=2}^n
\mathbb{E}\left[e^{\alpha_{n}n^{2}\sum_{p=2}^{k}\beta_{p}t(H_{p},G_n)}\bigg|X_{1i}=1,X_{i'j'}=0,(i',j')\neq(1,i)\right]
\nonumber
\\
&=\sum_{i=2}^n e^{2\alpha_{n}\beta_{1}+\alpha_n n^2 \sum_{p=2}^k \beta_p c_p n^{-v(H_p)}}
\nonumber
\\
&=(n-1)e^{2\alpha_{n}\beta_{1}+\alpha_n n^2 \sum_{p=2}^k \beta_p c_p n^{-v(H_p)}},
\nonumber
\end{align}
where $v(H_p)\geq 3$ denotes the number of vertices of $H_p$ and $c_p$ is the number of homomorphisms from $H_p$ into $G_n$ (a graph with only one edge) and so is a constant that only depends on $H_p$.

\noindent On the other hand, since $\beta_{1},\ldots,\beta_{k}$ are all negative and $t(H_1, G_n)$ measures the edge density of $G_n$, using $(X_{ij})_{1\le i<j \le n}$ are iid Bernoulli under the uniform measure, we have
\begin{align}
&2^{\binom{n}{2}}\mathbb{E}\left[\sum_{i=2}^nX_{1i}e^{\alpha_{n}n^{2}\sum_{p=1}^{k}\beta_{p}t(H_{p},G_n)}\right]
\leq 2^{\binom{n}{2}}\mathbb{E}\left[\sum_{i=2}^nX_{1i}e^{\alpha_{n}n^{2}\beta_{1}t(H_{1},G_n)}\right]
\\
&=2^{\binom{n}{2}}\mathbb{E}\left[\sum_{i=2}^nX_{1i}e^{2\alpha_{n}\beta_{1}\sum_{1\leq i<j\leq n}X_{ij}}\right]
\nonumber
\\
&=2^{\binom{n}{2}}\frac{1}{2\alpha_{n}}\frac{\partial}{\partial\beta_{1}}\mathbb{E}\left[e^{2\alpha_{n}\beta_{1}\sum_{i=2}^nX_{1i}}\right]\mathbb{E}\left[e^{2\alpha_{n}\beta_{1}\sum_{2\leq i<j\leq n}X_{ij}}\right]
\nonumber
\\
&=2^{\binom{n}{2}}\frac{1}{2\alpha_{n}}\frac{\partial}{\partial\beta_{1}}\left(\frac{1+e^{2\alpha_{n}\beta_{1}}}{2}\right)^{n-1}\left(\frac{1+e^{2\alpha_{n}\beta_{1}}}{2}\right)^{\binom{n-1}{2}}
\nonumber
\\
&=(n-1)e^{2\alpha_{n}\beta_{1}}\left(1+e^{2\alpha_{n}\beta_{1}}\right)^{\binom{n}{2}-1}.
\nonumber
\end{align}
The conclusion follows when we apply the scaling assumption.
\end{proof}

\begin{theorem}
\label{varianceundir}
Assume that $\beta_{1},\ldots,\beta_{k}$ are all negative and $H_{1}$ denotes a single edge.
Let us further assume that $\lim_{n\rightarrow\infty}n^{2}e^{2\alpha_{n}\beta_{1}}=0$ and \\ $\lim_{n\rightarrow\infty}\frac{\alpha_n}{n}=0$. Then
\begin{equation}
\lim_{n\rightarrow\infty}\frac{\mathbb{P}^{(n)}(X_{1i}=1, X_{1j}=1)}{e^{4\alpha_{n}\beta_1}}=1, \hspace{0.2cm} i\neq j\neq 1.
\end{equation}
\end{theorem}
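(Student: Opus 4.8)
The plan is to mirror the proof of Theorem~\ref{meanundir}, replacing the single-edge marginal by the joint two-edge marginal. First I would use symmetry together with $X_{1i},X_{1j}\in\{0,1\}$ to write
\begin{equation*}
\mathbb{P}^{(n)}(X_{1i}=1,X_{1j}=1)=\mathbb{E}^{(n)}[X_{1i}X_{1j}]=\frac{1}{\binom{n-1}{2}}\mathbb{E}^{(n)}\left[\sum_{2\le i<j\le n}X_{1i}X_{1j}\right],
\end{equation*}
and then express the right-hand side as the ratio
\begin{equation*}
\frac{1}{\binom{n-1}{2}}\frac{2^{\binom{n}{2}}\mathbb{E}\!\left[\sum_{2\le i<j\le n}X_{1i}X_{1j}\,e^{\alpha_n n^2\sum_{p=1}^k\beta_p t(H_p,G_n)}\right]}{2^{\binom{n}{2}}\mathbb{E}\!\left[e^{\alpha_n n^2\sum_{p=1}^k\beta_p t(H_p,G_n)}\right]}.
\end{equation*}
The denominator is precisely the quantity already shown to tend to $1$ in Theorem~\ref{meanundir}, so no new work is needed there, and the whole problem reduces to sharp two-sided bounds on the numerator.

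For the lower bound I would restrict the uniform average to the single configuration in which $X_{1i}=X_{1j}=1$ and every other edge is absent. This two-star has edge homomorphism density $t(H_1,G_n)=4/n^2$, so that $\alpha_n n^2\beta_1 t(H_1,G_n)=4\alpha_n\beta_1$, producing the target factor $e^{4\alpha_n\beta_1}$, while the higher subgraphs contribute $\alpha_n n^2\sum_{p\ge2}\beta_p c_p n^{-v(H_p)}$, which vanishes because $v(H_p)\ge3$ and $\alpha_n/n\to0$, exactly as in Theorem~\ref{meanundir}. Summing over the $\binom{n-1}{2}$ choices of the pair $\{i,j\}$ gives a lower bound of the form $\binom{n-1}{2}e^{4\alpha_n\beta_1+o(1)}$.

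For the upper bound I would invoke $\beta_2,\ldots,\beta_k\le0$ to drop every term with $p\ge2$, reducing the numerator to $2^{\binom{n}{2}}\mathbb{E}[\sum_{2\le i<j\le n}X_{1i}X_{1j}\,e^{2\alpha_n\beta_1\sum_{1\le i<j\le n}X_{ij}}]$. Splitting $\sum_{1\le i<j\le n}X_{ij}=\sum_{i=2}^nX_{1i}+\sum_{2\le i<j\le n}X_{ij}$ lets the expectation factor over the edges incident to vertex~$1$ and the remaining edges. Using independence of the Bernoulli$(1/2)$ variables, each of the two forced incident edges contributes $\mathbb{E}[X_{1i}e^{2\alpha_n\beta_1X_{1i}}]=e^{2\alpha_n\beta_1}/2$, while the remaining $\binom{n}{2}-2$ edges each contribute $(1+e^{2\alpha_n\beta_1})/2$; counting the $\binom{n-1}{2}$ pairs and collecting the powers of $1/2$ against the prefactor $2^{\binom{n}{2}}$ should yield the clean bound $\binom{n-1}{2}e^{4\alpha_n\beta_1}(1+e^{2\alpha_n\beta_1})^{\binom{n}{2}-2}$.

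Finally I would divide both bounds by $\binom{n-1}{2}e^{4\alpha_n\beta_1}$ and by the denominator. The lower bound tends to $1$ once the $o(1)$ exponent is controlled, and the upper bound tends to $1$ because $(1+e^{2\alpha_n\beta_1})^{\binom{n}{2}-2}\to1$, which follows from $\binom{n}{2}e^{2\alpha_n\beta_1}\to0$ as guaranteed by the hypothesis $n^2e^{2\alpha_n\beta_1}\to0$. The one step demanding genuine care, rather than a verbatim copy of Theorem~\ref{meanundir}, is the second-moment bookkeeping in the upper bound: one must correctly separate the two selected incident edges, each carrying a factor $e^{2\alpha_n\beta_1}/2$, from the $\binom{n}{2}-2$ free edges. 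This is the natural place for an error of a constant factor or a power of $2$ to creep in, but it is ultimately a routine consequence of independence, so I do not anticipate a serious obstacle.
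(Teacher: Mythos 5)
Your proposal is correct and follows essentially the same route as the paper's own proof: the paper writes the second moment as $\frac{1}{(n-1)(n-2)}\bigl[\mathbb{E}^{(n)}[(\sum_{i}X_{1i})^2]-\mathbb{E}^{(n)}[\sum_i X_{1i}]\bigr]$ (your unordered-pair sum times $2/\binom{n-1}{2}^{-1}$, the same quantity), bounds the numerator below by restricting to two-star configurations, and bounds it above by dropping the $p\ge 2$ terms and evaluating the resulting product over independent Bernoulli edges (via derivatives of the moment generating function rather than your direct edge-by-edge factorization), arriving at the same bound $(n-1)(n-2)e^{4\alpha_n\beta_1}(1+e^{2\alpha_n\beta_1})^{\binom{n}{2}-2}$. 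The differences are purely notational, so no further comparison is needed.
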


\begin{proof}
By symmetry, the fact that $X_{1i}, X_{1j}\in\{0,1\}$ and the definition of the probability measure $\mathbb{P}^{(n)}$ \eqref{ll},
\begin{align}
&\mathbb{P}^{(n)}(X_{1i}=1, X_{1j}=1)
=\mathbb{E}^{(n)}[X_{1i}X_{1j}]
\\
&=\frac{1}{(n-1)(n-2)}\left[\mathbb{E}^{(n)}\left[\left(\sum_{i=2}^nX_{1i}\right)^2\right]
-\mathbb{E}^{(n)}\left[\sum_{i=2}^nX_{1i}\right]\right]
\nonumber
\\
&=\frac{1}{(n-1)(n-2)}\frac{2^{\binom{n}{2}}\mathbb{E}\left[\left[\left(\sum_{i=2}^nX_{1i}\right)^2
-\sum_{i=2}^nX_{1i}\right]e^{\alpha_{n}n^{2}\sum_{p=1}^{k}\beta_{p}t(H_{p},G_n)}\right]}
{2^{\binom{n}{2}}\mathbb{E}[e^{\alpha_{n}n^{2}\sum_{p=1}^{k}\beta_{p}t(H_{p},G_n)}]},
\nonumber
\end{align}
where $\mathbb{E}$ is the expectation associated with the uniform measure, i.e., each possible graph configuration is weighted equally at $1/2^{\binom{n}{2}}$.

\noindent We only need to analyze the numerator since the denominator has already been analyzed in Theorem \ref{meanundir}.
On one hand, since $t(H_1, G_n)$ measures the edge density of $G_n$, a graph $G_n$ with only a $2$-star $X_{1i}=X_{1j}=1$ for some $2\leq i\neq j\leq n$ gives $t(H_1, G_n)=4/n^2$ and carries a weight of $1/2^{\binom{n}{2}}$, which implies that
\begin{align}
&2^{\binom{n}{2}}\mathbb{E}\left[\left[\left(\sum_{i=2}^nX_{1i}\right)^2
-\sum_{i=2}^nX_{1i}\right]e^{\alpha_{n}n^{2}\sum_{p=1}^{k}\beta_{p}t(H_{p},G_n)}\right]
\\
&\geq
2^{\binom{n}{2}}\frac{1}{2}\sum_{2\leq i\leq n}\sum_{2\leq j\leq n: i\neq j}
\nonumber
\\
&\mathbb{E}\bigg[(2^2-2)e^{\alpha_{n}n^{2}\sum_{p=1}^{k}\beta_{p}t(H_{p},G_n)}
\bigg|X_{1i}=X_{1j}=1,X_{i'j'}=0,
\nonumber
\\
&\qquad\qquad\qquad\qquad\qquad\qquad\qquad(i',j')\neq(1,i)\text{ and }(i',j')\neq (1,j)\bigg]
\nonumber
\\
&\qquad\qquad\cdot
\mathbb{P}\left( X_{1i}=X_{1j}=1,X_{i'j'}=0,(i',j')\neq(1,i)\text{ and }(i',j')\neq (1,j)\right)
\nonumber
\\
&=e^{4\alpha_{n}\beta_{1}}\sum_{2\leq i\leq n}\sum_{2\leq j\leq n: i\neq j}
\mathbb{E}\bigg[e^{\alpha_{n}n^{2}\sum_{p=2}^{k}\beta_{p}t(H_{p},G_n)}
\bigg|X_{1i}=X_{1j}=1,X_{i'j'}=0,
\nonumber
\\
&\qquad\qquad\qquad\qquad\qquad\qquad\qquad(i',j')\neq(1,i)\text{ and }(i',j')\neq (1,j)\bigg]
\nonumber
\\
&=\sum_{2\leq i\leq n}\sum_{2\leq j\leq n: i\neq j} e^{4\alpha_{n}\beta_{1}+\alpha_n n^2 \sum_{p=2}^k \beta_p c_p n^{-v(H_p)}}
\nonumber
\\
&=(n-1)(n-2)e^{4\alpha_{n}\beta_{1}+\alpha_n n^2 \sum_{p=2}^k \beta_p c_p n^{-v(H_p)}},
\nonumber
\end{align}
where $v(H_p)\geq 3$ denotes the number of vertices of $H_p$ and $c_p$ is the number of homomorphisms from $H_p$ into $G_n$ (a graph with only a $2$-star) and so is a constant that only depends on $H_p$.

\noindent On the other hand, since $\beta_{1},\ldots,\beta_{k}$ are all negative and $t(H_1, G_n)$ measures the edge density of $G_n$, using $(X_{ij})_{1\le i<j \le n}$ are iid Bernoulli under the uniform measure, we have
\begin{align}
&2^{\binom{n}{2}}\mathbb{E}\left[\left[\left(\sum_{i=2}^nX_{1i}\right)^2
-\sum_{i=2}^nX_{1i}\right]e^{\alpha_{n}n^{2}\sum_{p=1}^{k}\beta_{p}t(H_{p},G_n)}\right]
\\
&\leq 2^{\binom{n}{2}}\mathbb{E}\left[\left[\left(\sum_{i=2}^nX_{1i}\right)^2
-\sum_{i=2}^nX_{1i}\right]e^{\alpha_{n}n^{2}\beta_{1}t(H_{1},G_n)}\right]
\nonumber
\\
&=2^{\binom{n}{2}}\mathbb{E}\left[\left[\left(\sum_{i=2}^nX_{1i}\right)^2
-\sum_{i=2}^nX_{1i}\right]e^{2\alpha_{n}\beta_{1}\sum_{1\leq i<j\leq n}X_{ij}}\right]
\nonumber
\\
&=2^{\binom{n}{2}}\left(\frac{1}{4\alpha_{n}^2}\frac{\partial^2}{\partial\beta_{1}^2}
\mathbb{E}\left[e^{2\alpha_{n}\beta_{1}\sum_{i=2}^nX_{1i}}\right]
-\frac{1}{2\alpha_{n}}\frac{\partial}{\partial\beta_{1}}\mathbb{E}\left[e^{2\alpha_{n}\beta_{1}\sum_{i=2}^nX_{1i}}\right]\right)
\nonumber
\\
&\qquad\qquad\qquad\qquad\qquad\qquad\cdot\mathbb{E}\left[e^{2\alpha_n\beta_1\sum_{2\leq i<j\leq n}X_{ij}}\right]
\nonumber
\\
&=2^{\binom{n}{2}}\left(\frac{1}{4\alpha_{n}^2}\frac{\partial^2}{\partial\beta_{1}^2}
\left(\frac{1+e^{2\alpha_{n}\beta_{1}}}{2}\right)^{n-1}-\frac{1}{2\alpha_{n}}
\frac{\partial}{\partial\beta_{1}}\left(\frac{1+e^{2\alpha_{n}\beta_{1}}}{2}\right)^{n-1}\right)
\nonumber
\\
&\qquad\qquad\qquad\qquad\qquad\qquad\qquad\qquad\cdot\left(\frac{1+e^{2\alpha_{n}\beta_{1}}}{2}\right)^{\binom{n-1}{2}}
\nonumber
\\
&=(n-1)(n-2)e^{4\alpha_{n}\beta_{1}}\left(1+e^{2\alpha_{n}\beta_{1}}\right)^{\binom{n}{2}-2}.
\nonumber
\end{align}
The conclusion follows when we apply the scaling assumption.
\end{proof}

Together, Theorems \ref{meanundir} and \ref{varianceundir} indicate that when the rate of divergence of $\alpha_n$ is between the order of $\log n$ and $n$, the graph displays Erd\H{o}s-R\'{e}nyi behavior in the large $n$ limit if all the parameters $\beta_1,\ldots,\beta_k$ are negative, where the edge formation probability is given by $e^{2\alpha_n\beta_1}$. It depends on $\beta_1$ and $n$ and decays to $0$ as $n\rightarrow \infty$. This is in sharp contrast to the standard exponential model where the parameters $\beta_1,\ldots,\beta_k$ are not scaled by $\alpha_n$ and are instead held fixed. In this so-called dense regime, Chatterjee and Diaconis \cite{ChatterjeeDiaconis} have done extensive research and found that when all the parameters $\beta_1,\ldots,\beta_k$ are non-negative, the graph behaves like an Erd\H{o}s-R\'{e}nyi random graph in the large $n$ limit, where the edge formation probability depends on all parameters $\beta_1,\ldots,\beta_k$. Nevertheless, not much is known when some of the parameters are negative. In fact, even when all the parameters are negative as assumed in the present investigation, analysis of the limiting behavior of a typical graph has proved to be very hard, except in some special cases such as the edge-(single)-star and edge-triangle models.

\begin{remark}
We make some comments on the scaling assumption in (\ref{scaling1}) before proceeding further. If a graph is sparse, then the density of edges might not scale with $n$ in the same way as the density of, say, triangles. Intuitively, if the chance of having an edge is small, then the chance of having a triangle would be even smaller. As such, one might expect that $\alpha_n$ should depend on $p$ and diverge to infinity faster when the structure of $H_p$ is more complicated. We generalize (\ref{scaling1}) following this philosophy,
\begin{equation*}
\beta_{p}^{(n)}=\beta_{p}\alpha_{n, p},\quad p=1,2,\ldots,k,
\end{equation*}
and assume that $\lim_{n\rightarrow\infty}n^{2}e^{2\alpha_{n, 1}\beta_{1}}=0$ and $\lim_{n\rightarrow\infty}\frac{\alpha_n, p}{n^{v(H_p)-2}}=0$ for $p=2,\ldots,k$. Under this more relaxed assumption, the proof for Theorems \ref{meanundir} and \ref{varianceundir} go through without modifications and the same conclusions hold.
\end{remark}

As mentioned earlier, we are interested in deriving the exact asymptotics of the limiting log partition function $\log Z_n$ of the exponential model, since it is the generating function for the expectations of all other random variables on the graph space. Due to the myriad of structural possibilities of $H_p$, this is a rather difficult task, but we are able to make some headway in the edge-(single)-star model. We briefly outline the rationale first. For the general exponential model $\mathbb{P}^{(n)}$ defined in (\ref{ll}),
Chatterjee and Dembo \cite{ChatterjeeDembo} showed that when $|\beta_{1}^{(n)}|+\cdots+|\beta_{k}^{(n)}|$
does not grow too fast, $\log Z_{n}/n^2$ may be approximated by
\begin{equation}
\label{add}
\frac{\log Z_n}{n^2} \asymp L_{n}=\sup_{x\in\mathcal{P}_{n}}\left\{\beta_{1}^{(n)}t(H_{1},x)+\cdots+\beta_{k}^{(n)}t(H_{k},x)
-\frac{I(x)}{n^{2}}\right\},
\end{equation}
where $\mathcal{P}_{n}=\{(x_{ij})_{1\leq i<j\leq n}:x_{ij}\in[0,1], 1\leq i<j\leq n\}$,
\begin{equation}
\label{T}
t(H_p, x)=\frac{1}{n^{v(H_p)}}\sum_{q_1,\ldots,q_{v(H_p)}=1}^n\prod_{\{l,l'\}\in E(H_p)}x_{q_l q_{l'}},
\end{equation}
and
\begin{equation*}
I(x)=\sum_{1\leq i<j\leq n}I(x_{ij})=\sum_{1\leq i<j\leq n}\left(x_{ij}\log x_{ij}+(1-x_{ij})\log(1-x_{ij})\right).
\end{equation*}
They also gave a concrete error bound for this approximation,
\begin{align}
\label{couple}
-\frac{cB}{n}
&\leq\frac{\log Z_{n}}{n^{2}}-L_{n}
\\
&\leq
CB^{8/5}n^{-1/5}(\log n)^{1/5}\left(1+\frac{\log B}{\log n}\right)+CB^{2}n^{-1/2},
\nonumber
\end{align}
where $B=|\beta_{1}^{(n)}|+\cdots+|\beta_{k}^{(n)}|$, and $c$ and $C$ are constants that may depend only on $H_{1},\ldots,H_{k}$.

For the edge-$p$-star model, i.e., $H_1$ is an edge and $H_2$ is a $p$-star, (\ref{add}) becomes
\begin{equation}
\label{OP}
L_n=\sup_{x\in\mathcal{P}_{n}}\left\{\alpha_n\beta_{1}t(H_{1},x)+\alpha_n\beta_{2}t(H_{2},x)
-\frac{I(x)}{n^{2}}\right\}.
\end{equation}
The supremum over upper triangular array $\mathcal{P}_n$ in (\ref{OP}) may be simplified further. On one hand, it was proved in Chatterjee and Diaconis \cite{ChatterjeeDiaconis} that when $H_{2}$ is a $p$-star,
\begin{equation}
L_n\leq \sup_{0\leq x^*\leq 1}\left\{\alpha_{n}\beta_{1}x^*+\alpha_{n}\beta_{2}(x^*)^{p}-\frac{1}{2}I(x^*)\right\}.
\end{equation}
On the other hand, by considering $(x_{ij}) \in \mathcal{P}_n$ where $x_{ij}\equiv x^*$ for any $1\leq i<j\leq n$,
\begin{equation}
L_{n}\geq \left(1-\frac{c}{n}\right)\sup_{0\leq x^*\leq 1}\left\{\alpha_{n}\beta_{1}x^*+\alpha_{n}\beta_{2}(x^*)^{p}-\frac{1}{2}I(x^*)\right\},
\end{equation}
where $c$ is a constant that only depends on $H_1$ and $H_2$. This extra dependency comes from the following consideration. The difference between $I(x)/n^2$
and $I(x^*)/2$ is easy to estimate, while the difference between $t(H_1,x)$ and $x^*$ (or between $t(H_2,x)$ and $(x^*)^{p}$) is
caused by the zero diagonal terms $x_{ii}$. We do a broad estimate and find that it is bounded by $c_p/n$, where $c_p$ is a constant that only depends on $H_p$.
Therefore the upper and lower bounds for $L_n$ are asymptotically the same,
\begin{equation}\label{Approx}
L_{n} \asymp \sup_{0\leq x^*\leq 1}\left\{\alpha_{n}\beta_{1}x^*+\alpha_{n}\beta_{2}(x^*)^{p}-\frac{1}{2}I(x^*)\right\}.
\end{equation}
The following Theorem \ref{sparse} explores the asymptotics for $L_n$ using (\ref{Approx}) and in turn provides an exact asymptotic estimate for $\log Z_n$ based on (\ref{couple}).

\begin{theorem}
\label{sparse}
Consider the edge-$p$-star model,  i.e., $H_{1}$ is an edge and $H_{2}$ is a $p$-star. Assume that $\beta_1$ and $\beta_2$ are both negative. Then
\begin{equation}
\lim_{n\rightarrow\infty}\frac{L_{n}}{e^{2\alpha_{n}\beta_{1}}}=\frac{1}{2}.
\end{equation}
Let us further assume that $\lim_{n\rightarrow\infty}\frac{\alpha_{n}^{8/5}(\log n)^{1/5}e^{2\alpha_{n}|\beta_{1}|}}{n^{1/5}}=0$. Then
\begin{equation}
\lim_{n\rightarrow\infty}\frac{\log Z_{n}}{n^{2}e^{2\alpha_{n}\beta_{1}}}=\frac{1}{2}.
\end{equation}
\end{theorem}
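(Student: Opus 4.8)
The plan is to analyze the one-dimensional variational problem on the right-hand side of (\ref{Approx}) exactly, and then transport the result to $\log Z_n$ through the Chatterjee--Dembo error bound (\ref{couple}). Throughout I would write $u = e^{2\alpha_n\beta_1}$, so that $u \to 0$ since $\beta_1 < 0$ and $\alpha_n \to \infty$, and set $g(x) = \alpha_n\beta_1 x + \alpha_n\beta_2 x^p - \tfrac12 I(x)$ for $x \in [0,1]$, so that $L_n \asymp \sup_{0 \leq x \leq 1} g(x)$.

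For the first limit the key observation is that the $\beta_2$-term may be stripped off for the upper bound and reinserted as a negligible correction for the lower bound. Since $\beta_2 < 0$ and $x^p \geq 0$, one has $g(x) \leq h(x) := \alpha_n\beta_1 x - \tfrac12 I(x)$. The function $h$ is strictly concave because $I''(x) = 1/(x(1-x)) > 0$, so its maximizer is the unique solution of $\tfrac12\log\frac{x}{1-x} = \alpha_n\beta_1$, namely $x^* = u/(1+u)$; substituting this value, the linear term cancels part of the entropy and leaves the closed form $\sup_x h(x) = \tfrac12\log(1+u)$. For the matching lower bound I would evaluate $g$ at the same $x^*$, giving $g(x^*) = \tfrac12\log(1+u) + \alpha_n\beta_2 (x^*)^p \geq \tfrac12\log(1+u) - |\beta_2|\alpha_n u^p$ via $(x^*)^p \leq u^p$. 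Dividing by $u$, the leading term satisfies $\log(1+u)/(2u) \to \tfrac12$, while the correction obeys $\alpha_n u^{p-1} = \alpha_n e^{2(p-1)\alpha_n\beta_1} \to 0$ because $p \geq 2$ and the exponential decay dominates the polynomial factor $\alpha_n$. Hence $\sup_x g(x)/u \to \tfrac12$, and since (\ref{Approx}) sandwiches $L_n$ as $(1 - c/n)\sup_x g(x) \leq L_n \leq \sup_x g(x)$ with $c/n \to 0$, the first claim $L_n/e^{2\alpha_n\beta_1} \to \tfrac12$ follows.

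For the second limit I would feed $L_n/u \to \tfrac12$ into (\ref{couple}), which after division by $u$ reduces the problem to showing $\frac{1}{u}\big(\frac{\log Z_n}{n^2} - L_n\big) \to 0$, with $B = \alpha_n(|\beta_1|+|\beta_2|)$. The dominant term on the right of (\ref{couple}), once multiplied by $1/u = e^{2\alpha_n|\beta_1|}$, is $B^{8/5}n^{-1/5}(\log n)^{1/5}e^{2\alpha_n|\beta_1|}$, which up to the constant $(|\beta_1|+|\beta_2|)^{8/5}$ is exactly the quantity assumed to tend to $0$. I would then show that the remaining pieces are controlled by this one: both the lower error $cB/(nu)$ and the term $B^2 n^{-1/2}/u$ are smaller than it by a factor tending to $0$, and the factor $1 + \log B/\log n$ stays bounded. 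The crucial structural point is that the standing hypothesis itself forces $\alpha_n = O(\log n)$: taking logarithms gives $2\alpha_n|\beta_1| \leq \tfrac15\log n + o(\log n)$, so $\log B = \log\alpha_n + O(1) = O(\log\log n)$ and hence $\log B/\log n \to 0$, which is what renders all the auxiliary factors harmless. Combining, $\frac{\log Z_n}{n^2 u} = \frac{L_n}{u} + o(1) \to \tfrac12$.

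The step I expect to require the most care is the bookkeeping of the last paragraph: one must verify simultaneously that the single hypothesis $\alpha_n^{8/5}(\log n)^{1/5}e^{2\alpha_n|\beta_1|}/n^{1/5} \to 0$ dominates every term of (\ref{couple}) after division by $u$, and this comparison relies on first extracting the growth constraint $\alpha_n = O(\log n)$ from that same hypothesis. By contrast the variational computation, being a one-variable strictly concave optimization with an explicit maximizer and an exact value for the supremum, should be routine.
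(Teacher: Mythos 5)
Your proposal is correct, and for the first limit it takes a genuinely different (and arguably cleaner) route than the paper. The paper works with the exact maximizer $x^{\ast}$ of the full functional: it invokes the uniqueness of the optimizer from Radin--Yin and Aristoff--Zhu, rewrites the first-order condition (\ref{FirstDerivative}) to show $x^{\ast}/e^{2\alpha_{n}\beta_{1}}\to 1$, and then substitutes the first-order condition back into $L_{n}$ to get $L_{n}=\alpha_{n}\beta_{2}(x^{\ast})^{p}(1-p)-\tfrac{1}{2}\log(1-x^{\ast})$, from which $L_{n}/x^{\ast}\to\tfrac{1}{2}$. You instead sandwich the supremum: dropping the (negative) $p$-star term gives the explicit upper bound $\tfrac{1}{2}\log(1+e^{2\alpha_{n}\beta_{1}})$, and evaluating the full functional at the maximizer $u/(1+u)$ of the truncated problem gives a matching lower bound up to the error $|\beta_{2}|\alpha_{n}e^{2p\alpha_{n}\beta_{1}}$, which is negligible after dividing by $u=e^{2\alpha_{n}\beta_{1}}$ precisely because $p\geq 2$. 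This avoids any appeal to uniqueness or to the implicit equation for the true optimizer, at the mild cost of not identifying the optimizer itself (which the paper's route yields as a byproduct and which echoes Theorems \ref{meanundir} and \ref{varianceundir}). For the second limit your argument is the same as the paper's --- plug $B=\alpha_{n}(|\beta_{1}|+|\beta_{2}|)$ into (\ref{couple}) and divide by $e^{2\alpha_{n}\beta_{1}}$ --- but you supply the bookkeeping the paper leaves implicit, in particular the observation that the hypothesis itself forces $2\alpha_{n}|\beta_{1}|\leq\tfrac{1}{5}\log n+o(\log n)$, hence $\alpha_{n}=O(\log n)$ and $\log B/\log n\to 0$, which is exactly what is needed to check that the $B^{2}n^{-1/2}$ and $cB/n$ terms are dominated. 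No gaps.
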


\begin{proof}
The optimization problem \eqref{Approx} was well studied in Radin and Yin \cite{Radin} and Aristoff and Zhu \cite{AristoffZhu}. When $\beta_1$ and $\beta_2$ are both negative, they showed that the optimizer $x^*$ uniquely satisfies
\begin{equation}\label{FirstDerivative}
\alpha_{n}\beta_{1}+\alpha_{n}\beta_{2}p(x^{\ast})^{p-1}=\frac{1}{2}\log\left(\frac{x^{\ast}}{1-x^{\ast}}\right).
\end{equation}
Since $|\alpha_{n}\beta_{1}|$ and $|\alpha_{n}\beta_{2}|$ both diverge to infinity as $n\rightarrow\infty$, $x^{\ast}\rightarrow 0$ as $n\rightarrow\infty$. We can rewrite \eqref{FirstDerivative} as
\begin{equation}\label{FirstRe}
(1-x^{\ast})e^{2\alpha_{n}\beta_{2}p(x^{\ast})^{p-1}}=\frac{x^{\ast}}{e^{2\alpha_{n}\beta_{1}}}.
\end{equation}
This shows that $\frac{x^{\ast}}{e^{2\alpha_{n}\beta_{1}}}\leq 1$,
and thus
\begin{equation}
2\alpha_{n}\beta_{2}p(x^{\ast})^{p-1}=2\alpha_{n}\beta_{2}pe^{2(p-1)\alpha_{n}\beta_{1}}\left(\frac{x^{\ast}}{e^{2\alpha_{n}\beta_{1}}}\right)^{p-1}
\rightarrow 0
\end{equation}
as $n\rightarrow\infty$. Hence, we conclude that
\begin{equation}
\lim_{n\rightarrow\infty}\frac{x^{\ast}}{e^{2\alpha_{n}\beta_{1}}}=1.
\end{equation}

By (\ref{Approx}) and \eqref{FirstDerivative},
\begin{align}
L_{n}&=
\alpha_{n}\beta_{1}x^{\ast}+\alpha_{n}\beta_{2}(x^{\ast})^{p}-\frac{1}{2}x^{\ast}\log x^{\ast}-\frac{1}{2}(1-x^{\ast})\log(1-x^{\ast})
\\
&=\alpha_{n}\beta_{2}(x^{\ast})^{p}(1-p)-\frac{1}{2}\log(1-x^{\ast})
\nonumber
\\
&=\alpha_{n}\beta_{2}(x^{\ast})^{p}(1-p)+\frac{1}{2}x^{\ast}+O((x^{\ast})^{2})
\nonumber,
\end{align}
which implies that $\lim_{n\rightarrow\infty}\frac{L_{n}}{x^{\ast}}=\frac{1}{2}$ and
$\lim_{n\rightarrow\infty}\frac{L_{n}}{e^{2\alpha_{n}\beta_{1}}}=\frac{1}{2}$. From (\ref{couple}),
\begin{align}
&-\frac{c\alpha_{n}(|\beta_{1}|+|\beta_{2}|)}{n} \leq\frac{\log Z_{n}}{n^{2}}-L_{n}
\\
&\leq
C\alpha_{n}^{8/5}(|\beta_{1}|+|\beta_{2}|)^{8/5}
n^{-1/5}(\log n)^{1/5}\left(1+\frac{\log\alpha_{n}+\log(|\beta_{1}|+|\beta_{2}|)}{\log n}\right)
\nonumber
\\
&\qquad\qquad
+C(|\beta_{1}|+|\beta_{2}|)^{2}\alpha_{n}^{2}n^{-1/2}.
\nonumber
\end{align}
Therefore, under the further assumption that
$\lim_{n\rightarrow\infty}\frac{\alpha_{n}^{8/5}(\log n)^{1/5}e^{2\alpha_{n}|\beta_{1}|}}{n^{1/5}}=0$,
we have
\begin{equation}
\left|\frac{\log Z_{n}}{n^{2}e^{2\alpha_{n}\beta_{1}}}-\frac{L_{n}}{e^{2\alpha_{n}\beta_{1}}}\right|
\rightarrow 0\qquad\text{as $n\rightarrow\infty$},
\end{equation}
and hence
\begin{equation}
\lim_{n\rightarrow\infty}\frac{\log Z_{n}}{n^{2}e^{2\alpha_{n}\beta_{1}}}=\frac{1}{2}.
\end{equation}
\end{proof}

\begin{remark}
This is consistent with the observations in Theorems \ref{meanundir} and \ref{varianceundir}, since the log partition function $\log Z_n$ of an Erd\H{o}s-R\'{e}nyi random graph with edge formation probability $e^{2\alpha_n\beta_1}$ may be approximated by
\begin{equation}
\frac{\log Z_n}{n^2} \asymp -\frac{1}{2}\log(1-e^{2\alpha_n\beta_1}) \asymp \frac{1}{2}e^{2\alpha_n\beta_1}.
\end{equation}
\end{remark}

\section{Directed Graphs}
\label{directed}
Consider directed graphs $G_n$ on $n$ vertices,
where a graph is represented by a matrix
$X_n=(X_{ij})_{1\le i,j \le n}$ with each $X_{ij} \in \{0,1\}$.
Here, $X_{ij} = 1$ means there is a directed edge from vertex $i$ to
vertex $j$; otherwise, $X_{ij}=0$. Give the set of such graphs the probability
\begin{equation}\label{1}
{\mathbb P}^{(n)}(G_n) = Z_n(\beta_{1}^{(n)},\beta_{2}^{(n)},\ldots,\beta_{k}^{(n)})^{-1}\exp\left[n^2\left(\sum_{p=1}^{k}\beta_{p}^{(n)} s_{p}(X_n)\right)\right],
\end{equation}
where
\begin{equation}\label{es}
s_{p}(X_n)= n^{-p-1} \sum_{1\leq i,j_{1},j_{2},\ldots,j_{p}\leq n}X_{ij_{1}}X_{ij_{2}}\cdots X_{ij_{p}}
\end{equation}
is the directed $p$-star homomorphism density of $G_n$ and $Z_n(\beta_{1}^{(n)},\beta_{2}^{(n)},\ldots,\beta_{k}^{(n)})$ is the appropriate normalization. The parameters $\beta_p^{(n)}$ are
scaled according to the number of vertices of the graph,
\begin{equation}
\label{scaling2}
\beta_{p}^{(n)}=\beta_{p}\alpha_{n},\quad p=1,2,\ldots,k,
\end{equation}
where $\alpha_{n}\rightarrow\infty$ as $n\rightarrow\infty$. Note that $s_{p}(X_n)$ has an alternate expression
\begin{equation}
\label{alternate}
s_{p}(X_n) = n^{-p-1} \sum_{i=1}^n \left(\sum_{j=1}^n X_{ij}\right)^p,
\end{equation}
and in particular, when $p=1$, it represents the directed edge homomorphism density of $G_n$.
For ease of notation, we allow $X_{ii}$ to equal $1$, but it is not hard to see that without this simplification,
our main results still hold. As in the undirected case, we are interested in the situation where a typical random graph
sampled from the exponential model $\mathbb P^{(n)}$ (\ref{1}) is sparse, i.e., the probability that there is a directed
edge from vertex $i$ to vertex $j$ goes to $0$ as $n\rightarrow \infty$. We ask the same question: When the parameters $\beta_i$ are all negative, what is the speed of the graph towards sparsity? The following Remark \ref{remark} and Theorems \ref{meandir} and \ref{variancedir} explore the asymptotics of the limiting log partition function $\log Z_n$ and the mean and variance of the limiting probability distribution $\mathbb P^{(n)}$ and provide some concrete answers under a sparsity assumption about the parameters. Together, they show that when the rate of divergence of $\alpha_n$ is between the order of $\log n$ and $n$, the graph displays Erd\H{o}s-R\'{e}nyi behavior in the large $n$ limit, where the edge formation probability is given by $e^{\alpha_n\beta_1}$. It depends on $\beta_1$ and $n$ and decays to $0$ as $n\rightarrow \infty$.

\begin{remark}
\label{remark}
(i) When $\beta_{1},\ldots,\beta_{k}$ are all negative, on one hand,
\begin{equation}
(Z_{n})^{\frac{1}{n}}\leq\sum_{j=0}^{n}\binom{n}{j}e^{n\alpha_{n}\beta_{1}(\frac{j}{n})}
=(1+e^{\alpha_{n}\beta_{1}})^{n},
\end{equation}
which implies that $\limsup_{n\rightarrow\infty}(Z_{n})^{\frac{1}{n^{2}}}\leq 1$. On the other hand,
\begin{equation}
(Z_{n})^{\frac{1}{n}}\geq\binom{n}{0}e^{n\alpha_{n}\sum_{p=1}^{k}\beta_{p}(\frac{0}{n})^{p}}=1.
\end{equation}
Therefore
\begin{equation}
\lim_{n\rightarrow\infty}(Z_{n})^{\frac{1}{n^{2}}}=1.
\end{equation}

(ii) Furthermore, if we assume that $\lim_{n\rightarrow\infty}ne^{\alpha_{n}\beta_{1}}=0$, then
\begin{equation}
\lim_{n\rightarrow\infty}(Z_{n})^{\frac{1}{n}}=1.
\end{equation}

(iii) If instead we assume that $\lim_{n\rightarrow \infty}ne^{\alpha_{n}\beta_{1}}=\lambda \in (0, \infty)$, then as will be shown in Theorem \ref{caselambda},
\begin{equation}
\lim_{n\rightarrow\infty}(Z_n)^{\frac{1}{n}}=e^{\lambda}.
\end{equation}

(iv) We can get more precise asymptotics.
Let us assume that \\ $\lim_{n\rightarrow\infty}ne^{\alpha_{n}\beta_{1}}=0$ and $\lim_{n\rightarrow\infty}\frac{\alpha_{n}}{n}=0$.
On one hand,
\begin{equation}
\frac{\log Z_{n}}{n^{2}}\leq\log(1+e^{\alpha_{n}\beta_{1}}),
\end{equation}
which implies that
\begin{equation}
\limsup_{n\rightarrow\infty}\frac{\log Z_{n}}{n^{2}e^{\alpha_n\beta_1}}\leq 1.
\end{equation}
On the other hand,
\begin{equation}
(Z_{n})^{\frac{1}{n}}\geq\sum_{j=0}^{1}\binom{n}{j}e^{n\alpha_{n}\sum_{p=1}^{k}\beta_{p}(\frac{j}{n})^{p}}
=1+ne^{n\alpha_{n}\sum_{p=1}^{k}\beta_{p}(\frac{1}{n})^{p}},
\end{equation}
which implies that
\begin{align}
\liminf_{n\rightarrow\infty}\frac{\log Z_{n}}{n^{2}e^{\alpha_{n}\beta_{1}}}
&\geq\liminf_{n\rightarrow\infty}\frac{\log(1+ne^{n\alpha_{n}\sum_{p=1}^{k}\beta_{p}(\frac{1}{n})^{p}})}{ne^{\alpha_{n}\beta_{1}}}
\\
&=\liminf_{n\rightarrow\infty}\frac{\log(1+ne^{n\alpha_{n}\sum_{p=1}^{k}\beta_{p}(\frac{1}{n})^{p}})}{ne^{n\alpha_{n}\sum_{p=1}^{k}\beta_{p}(\frac{1}{n})^{p}}}
\frac{ne^{n\alpha_{n}\sum_{p=1}^{k}\beta_{p}(\frac{1}{n})^{p}}}{ne^{\alpha_{n}\beta_{1}}}
\nonumber
\\
&=1.\nonumber
\end{align}
Therefore
\begin{equation}
\lim_{n\rightarrow\infty}\frac{\log Z_{n}}{n^{2}e^{\alpha_{n}\beta_{1}}}=1.
\end{equation}
\end{remark}

\begin{theorem}
\label{meandir}
Assume that $\beta_{1},\ldots,\beta_{k}$ are all negative. Let us further assume that $\lim_{n\rightarrow\infty}ne^{\alpha_{n}\beta_{1}}=0$
and $\lim_{n\rightarrow\infty}\frac{\alpha_{n}}{n}=0$. Then
\begin{equation}
\lim_{n\rightarrow\infty}\frac{\mathbb{P}^{(n)}(X_{1i}=1)}{e^{\alpha_{n}\beta_{1}}}=1.
\end{equation}
\end{theorem}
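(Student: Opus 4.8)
The plan is to exploit the product structure that is special to the directed $p$-star model and makes it far more tractable than its undirected counterpart. Writing $d_l=\sum_{j=1}^n X_{lj}$ for the out-degree of vertex $l$ and using the alternate expression (\ref{alternate}), the exponent in (\ref{1}) becomes $\alpha_n n\sum_{l=1}^n f(d_l/n)$, where $f(x)=\sum_{p=1}^k\beta_p x^p$. Since each summand depends only on row $l$ and the entries $(X_{lj})$ are iid Bernoulli$(1/2)$ across rows under the uniform measure, the Boltzmann weight factorizes over rows. Consequently, in the ratio defining $\mathbb{E}^{(n)}[X_{1i}]=\mathbb{P}^{(n)}(X_{1i}=1)$ all rows other than row $1$ cancel, and the problem reduces to a single-row computation
\[
\mathbb{E}^{(n)}[X_{1i}]=\frac{\mathbb{E}[X_{1i}\,g(d_1)]}{\mathbb{E}[g(d_1)]},\qquad g(d):=\exp\!\left(\alpha_n n f(d/n)\right),
\]
where $d_1\sim\mathrm{Bin}(n,1/2)$.

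First I would use the exchangeability of the $n$ entries within row $1$ to write $\mathbb{E}[X_{1i}g(d_1)]=\frac{1}{n}\mathbb{E}[d_1 g(d_1)]$, so that, after the factors of $2^{-n}$ cancel,
\[
\mathbb{E}^{(n)}[X_{1i}]=\frac{N}{D},\qquad N:=\frac{1}{n}\sum_{j=0}^n\binom{n}{j}j\,g(j),\quad D:=\sum_{j=0}^n\binom{n}{j}g(j).
\]
Because every $\beta_p$ is negative, the higher-order ($p\ge 2$) contributions to the exponent are nonpositive, which yields the termwise bound $g(j)\le e^{\alpha_n\beta_1 j}=\theta^j$ with $\theta:=e^{\alpha_n\beta_1}\in(0,1)$. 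This is the estimate I would lean on to control both sums.

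Next I would squeeze $N$ and $D$. For the denominator, $1\le D\le\sum_j\binom{n}{j}\theta^j=(1+\theta)^n\le e^{n\theta}$, and since $n\theta=ne^{\alpha_n\beta_1}\to 0$ by hypothesis, $D\to 1$. For the numerator, retaining only the $j=1$ term gives the lower bound $N\ge g(1)$, while the identity $\sum_j\binom{n}{j}j x^j=nx(1+x)^{n-1}$ together with $g(j)\le\theta^j$ gives the upper bound $N\le\theta(1+\theta)^{n-1}$. Dividing through by $\theta$, both $g(1)/\theta$ and $(1+\theta)^{n-1}$ tend to $1$, so $N/\theta\to 1$, whence $\mathbb{E}^{(n)}[X_{1i}]/e^{\alpha_n\beta_1}=(N/\theta)/D\to 1$.

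The main obstacle, really the only delicate point, is verifying that the sub-leading terms do not disturb the leading $e^{\alpha_n\beta_1}$ asymptotics, and this is precisely where the two hypotheses enter. The bound $(1+\theta)^n\to 1$ consumes $ne^{\alpha_n\beta_1}\to 0$, while showing $g(1)/\theta=\exp(\alpha_n\sum_{p\ge 2}\beta_p n^{1-p})\to 1$ requires $\alpha_n/n\to 0$, since the dominant correction is of order $\alpha_n\beta_2 n^{-1}$. I would check that each of the finitely many correction terms $\alpha_n\beta_p n^{1-p}$ with $2\le p\le k$ vanishes, which is immediate from $n^{1-p}\le n^{-1}$ for $p\ge 2$.
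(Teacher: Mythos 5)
Your proposal is correct and follows essentially the same route as the paper: the paper's (sketched) proof likewise reduces to the out-degree $d_1\sim\mathrm{Bin}(n,1/2)$ of vertex $1$ after the rows factor out, takes the $j=0$ and $j=1$ terms as lower bounds for the denominator and numerator respectively, and upper-bounds both by dropping the $p\ge 2$ terms from the exponent (your $g(j)\le\theta^{j}$). You have simply written out explicitly the row-factorization and cancellation that the paper leaves implicit (and uses without comment in the proof of Theorem \ref{caselambda}).
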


\begin{proof}
The proof follows a similar line of reasoning as in the proof of Theorem \ref{meanundir}. However, due to the alternate expression of the directed star density (\ref{alternate}), rather than concentrating on each single edge, we will examine the number of directed edges from vertex $1$ as a whole, which is Binomial under the uniform measure. Using symmetry, we write the probability of a directed edge as a quotient of two expectations. The lower bound for the denominator is obtained by considering a graph with no edges protruding from vertex $1$, and the lower bound for the numerator is obtained by considering a graph with only one edge protruding from vertex $1$. The upper bound for both the numerator and the denominator is obtained by including only the directed edge density in the exponent.
\end{proof}

\begin{theorem}
\label{variancedir}
Assume that $\beta_{1},\ldots,\beta_{k}$ are all negative. Let us further assume that $\lim_{n\rightarrow\infty}ne^{\alpha_{n}\beta_{1}}=0$
and $\lim_{n\rightarrow\infty}\frac{\alpha_{n}}{n}=0$. Then
\begin{equation}
\lim_{n\rightarrow\infty}\frac{\mathbb{P}^{(n)}(X_{1i}=1,X_{1j}=1)}
{e^{2\alpha_{n}\beta_{1}}}=1, \hspace{0.2cm} i\neq j.
\end{equation}
\end{theorem}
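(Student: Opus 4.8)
The plan is to follow the strategy behind Theorem \ref{varianceundir}, but to exploit the product structure of the directed star densities that already drove Theorem \ref{meandir}. First I would reduce the pair probability to a single out-degree moment. Writing $D_1=\sum_{l=1}^n X_{1l}$ for the out-degree of vertex $1$ and using that $\mathbb{P}^{(n)}$ is invariant under permutations of the column index (because each $s_p$ depends on $X_n$ only through the row sums), all the probabilities $\mathbb{P}^{(n)}(X_{1i}=1,X_{1j}=1)$ with $i\neq j$ coincide. Since $D_1^2-D_1=\sum_{l\neq m}X_{1l}X_{1m}$ is a sum over the $n(n-1)$ ordered pairs $(l,m)$ with $l\neq m$,
\[
\mathbb{P}^{(n)}(X_{1i}=1,X_{1j}=1)=\frac{1}{n(n-1)}\mathbb{E}^{(n)}\left[D_1^2-D_1\right].
\]

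The crucial simplification, already implicit in Theorem \ref{meandir}, is that the exponent factorizes over rows: from \eqref{alternate} we have $n^2\sum_{p=1}^k\beta_p^{(n)}s_p(X_n)=\sum_{i=1}^n f(D_i)$ with $f(d):=\alpha_n\sum_{p=1}^k\beta_p n^{1-p}d^p$. Hence, under the uniform measure the rows are independent Binomials, the contributions of rows $2,\ldots,n$ cancel between numerator and denominator, and $D_1$ carries the explicit law proportional to $\binom{n}{d}e^{f(d)}$. Therefore
\[
\mathbb{E}^{(n)}\left[D_1^2-D_1\right]=\frac{\sum_{d=0}^n\binom{n}{d}d(d-1)e^{f(d)}}{\sum_{d=0}^n\binom{n}{d}e^{f(d)}}.
\]

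Next I would pinch this ratio between matching bounds, paralleling the numerator/denominator estimates sketched for Theorem \ref{meandir}. For the denominator, the $d=0$ term gives the lower bound $1$, while dropping the negative $p\geq 2$ contributions (so that $f(d)\leq\alpha_n\beta_1 d$) gives the upper bound $\sum_d\binom{n}{d}e^{\alpha_n\beta_1 d}=(1+e^{\alpha_n\beta_1})^n$. For the numerator, retaining only the $d=2$ term (a single directed $2$-star out of vertex $1$) gives the lower bound $n(n-1)e^{f(2)}$, while the same edge-only relaxation together with the identity $\sum_d\binom{n}{d}d(d-1)x^d=n(n-1)x^2(1+x)^{n-2}$ gives the upper bound $n(n-1)e^{2\alpha_n\beta_1}(1+e^{\alpha_n\beta_1})^{n-2}$. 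After dividing by $n(n-1)$ and then by $e^{2\alpha_n\beta_1}$, the claim reduces to showing that both $e^{f(2)-2\alpha_n\beta_1}(1+e^{\alpha_n\beta_1})^{-n}$ and $(1+e^{\alpha_n\beta_1})^{n-2}$ tend to $1$.

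This final step is where the two sparsity hypotheses do their work, and it is the only place needing care. The assumption $\lim_n ne^{\alpha_n\beta_1}=0$ forces $(1+e^{\alpha_n\beta_1})^n\to 1$, so the edge-only upper bound is asymptotically tight; the assumption $\lim_n \alpha_n/n=0$ controls the multi-star correction, since $f(2)-2\alpha_n\beta_1=\alpha_n\sum_{p=2}^k\beta_p n^{1-p}2^p$ and each term is bounded in absolute value by $2^p|\beta_p|\alpha_n/n\to 0$, whence $e^{f(2)-2\alpha_n\beta_1}\to 1$. The anticipated obstacle is thus not any single estimate but the joint verification that the $p\geq 2$ stars contribute negligibly in the lower bound while the edge-only relaxation remains tight in the upper bound; both are guaranteed precisely by the stated growth conditions on $\alpha_n$, so the bounds pinch and yield $\mathbb{P}^{(n)}(X_{1i}=1,X_{1j}=1)/e^{2\alpha_n\beta_1}\to 1$.
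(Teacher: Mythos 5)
Your proposal is correct and follows essentially the same route as the paper: the paper's (sketched) proof likewise writes the pair probability as a quotient of expectations via symmetry, lower-bounds the denominator by the empty-row configuration and the numerator by the configuration with exactly two edges out of vertex $1$, and upper-bounds both by dropping the (negative) $p\ge 2$ star terms, exactly as you do. Your explicit reduction to the out-degree law $\binom{n}{d}e^{f(d)}$ is the same factorization the paper carries out in detail in the proof of Theorem \ref{caselambda}, and your final limits $(1+e^{\alpha_n\beta_1})^n\to 1$ and $e^{f(2)-2\alpha_n\beta_1}\to 1$ use the two hypotheses exactly as intended.
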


\begin{proof}
The proof follows a similar line of reasoning as in the proof of Theorem \ref{varianceundir}. Using symmetry, we write the probability of a directed $2$-star as a quotient of two expectations. The lower bound for the denominator is obtained by considering a graph with no edges protruding from vertex $1$, and the lower bound for the numerator is obtained by considering a graph with two edges protruding from vertex $1$. The upper bound for both the numerator and the denominator is obtained by including only the directed edge density in the exponent.
\end{proof}

\begin{remark}
As in the undirected case, we remark that the scaling assumption in (\ref{scaling2}) may be generalized by taking
\begin{equation*}
\beta_{p}^{(n)}=\beta_{p}\alpha_{n, p},\quad p=1,2,\ldots,k,
\end{equation*}
and assuming that $\lim_{n\rightarrow\infty}ne^{\alpha_{n, 1}\beta_{1}}=0$ and $\lim_{n\rightarrow\infty}\frac{\alpha_n, p}{n^{p-1}}=0$ for $p=2,\ldots,k$. Under this more relaxed assumption, the proof for Theorems \ref{meandir} and \ref{variancedir} go through without modifications and the same conclusions hold.
\end{remark}

We now ask some related questions. Consider an Erd\H{o}s-R\'{e}nyi random graph on $n$ vertices with edge formation probability $\rho$. The distribution of the degree of any vertex $i$ is Binomial with parameters $n$ and $\rho$. A known fact is that for $n$ large, $\rho$ small and $n\rho$ a constant, Binomial distribution with these parameters tends to a Poisson distribution with parameter $n\rho$. We have seen in Theorems \ref{meandir} and \ref{variancedir} that when the rate of divergence of $\alpha_n$ is between the order of $\log n$ and $n$, the graph displays Erd\H{o}s-R\'{e}nyi behavior, where the edge formation probability is given by $e^{\alpha_n\beta_1}$. One natural question to ask is if $ne^{\alpha_n\beta_1}$ approaches a constant $\lambda \in (0,\infty)$ as $n\rightarrow \infty$, i.e., when the divergence rate of $\alpha_n$ is of the order of $\log n$, will the graph display Poisson behavior? The following Theorem \ref{caselambda} gives an affirmative answer to this question. Notice that if $\lim_{n\rightarrow\infty}ne^{\alpha_{n}\beta_{1}}=\lambda$, then $\lim_{n\rightarrow\infty}\frac{\alpha_{n}}{n}=0$ is automatically satisfied.

\begin{theorem}
\label{caselambda}
Assume that $\beta_{1},\ldots,\beta_{k}$ are all negative. Let us further assume that
$\lim_{n\rightarrow\infty}ne^{\alpha_{n}\beta_{1}}=\lambda\in(0,\infty)$.
Then
\begin{equation}
\lim_{n\rightarrow\infty}\frac{\mathbb{P}^{(n)}(X_{1i}=1)}{\lambda n^{-1}}=1.
\end{equation}
\begin{equation}
\lim_{n\rightarrow\infty}\frac{\mathbb{P}^{(n)}(X_{1i}=1,X_{1j}=1)}{\lambda^2 n^{-2}}=1, \hspace{0.2cm} i\neq j.
\end{equation}
Moreover, the degree of any vertex is asymptotically Poisson with parameter $\lambda$, i.e.,
\begin{equation}
\sum_{i=1}^{n}X_{1i}\rightarrow\text{Poisson}(\lambda)
\end{equation}
in distribution as $n\rightarrow\infty$.
\end{theorem}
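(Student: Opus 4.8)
The plan is to exploit a factorization of the measure over the out-neighborhoods of the vertices that is special to the directed $p$-star model. Writing $d_i=\sum_{j=1}^n X_{ij}$ for the out-degree of vertex $i$ and using the alternate expression \eqref{alternate}, the exponent in \eqref{1} becomes
\begin{equation*}
n^2\sum_{p=1}^k\beta_p^{(n)}s_p(X_n)=\alpha_n\sum_{p=1}^k\beta_p n^{1-p}\sum_{i=1}^n d_i^p=\sum_{i=1}^n f_n(d_i),\qquad f_n(m):=\alpha_n\sum_{p=1}^k\beta_p n^{1-p}m^p.
\end{equation*}
Since the rows $(X_{i1},\ldots,X_{in})$ are independent iid Bernoulli$(1/2)$ vectors under the uniform measure and $f_n(d_i)$ depends only on row $i$, the tilted measure $\mathbb{P}^{(n)}$ keeps the rows independent. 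Hence the out-degree of any fixed vertex has the explicit marginal
\begin{equation*}
\mathbb{P}^{(n)}(d_1=m)=\frac{q_m^{(n)}}{W_n},\qquad q_m^{(n)}:=\binom{n}{m}e^{f_n(m)},\qquad W_n:=\sum_{j=0}^n q_j^{(n)},
\end{equation*}
with $W_n=(Z_n)^{1/n}$, matching Remark \ref{remark}. All three assertions will follow from the behavior of this one-dimensional tilted Binomial.

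I would first establish the pointwise limit: fix $m$. Then $\binom{n}{m}\sim n^m/m!$, while the $p=1$ part of $f_n(m)$ gives $n^m e^{\alpha_n\beta_1 m}=(ne^{\alpha_n\beta_1})^m\to\lambda^m$. For $p\ge 2$ the term $\alpha_n\beta_p n^{1-p}m^p=\beta_p m^p\,\alpha_n/n^{p-1}$ tends to $0$, because the hypothesis $ne^{\alpha_n\beta_1}\to\lambda$ pins $\alpha_n$ at order $\log n$ (namely $\alpha_n\sim(\log n)/|\beta_1|$), so $\alpha_n/n^{p-1}\to 0$. Thus $q_m^{(n)}\to\lambda^m/m!$ for every fixed $m$. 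Next comes the domination. Since every $\beta_p<0$ and $m^p\ge 0$, each $p\ge 2$ term of $f_n(m)$ is nonpositive, so $f_n(m)\le\alpha_n\beta_1 m$ and
\begin{equation*}
q_m^{(n)}\le\binom{n}{m}e^{\alpha_n\beta_1 m}\le\frac{(ne^{\alpha_n\beta_1})^m}{m!}\le\frac{M^m}{m!}
\end{equation*}
for all $m$ and all large $n$, with $M:=\lambda+1$; this bound is summable and uniform in $n$.

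With these two ingredients the conclusion is immediate via dominated convergence on the counting measure of $\{0,1,2,\ldots\}$ (extending $q_m^{(n)}=0$ for $m>n$). First, $W_n\to\sum_{m\ge 0}\lambda^m/m!=e^\lambda$, whence $\mathbb{P}^{(n)}(d_1=m)\to e^{-\lambda}\lambda^m/m!$ for every $m$, which is exactly the asserted convergence of $\sum_{i=1}^n X_{1i}=d_1$ to Poisson$(\lambda)$ in distribution, part (iii). The same domination, now against the summable sequences $m\,M^m/m!$ and $m(m-1)M^m/m!$, upgrades this to convergence of the first two factorial moments,
\begin{equation*}
\mathbb{E}^{(n)}[d_1]\to\lambda,\qquad \mathbb{E}^{(n)}[d_1(d_1-1)]\to\lambda^2.
\end{equation*}
Finally, by symmetry of $\mathbb{P}^{(n)}$ under permutations of the columns, $\mathbb{P}^{(n)}(X_{1i}=1)=\frac{1}{n}\mathbb{E}^{(n)}[d_1]$ and $\mathbb{P}^{(n)}(X_{1i}=1,X_{1j}=1)=\frac{1}{n(n-1)}\mathbb{E}^{(n)}[d_1(d_1-1)]$ for $i\neq j$; multiplying by $n$ and by $n^2$ and using $n^2/(n(n-1))\to 1$ yields parts (i) and (ii).

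The main obstacle, and the precise place where the negativity of \emph{all} the parameters is used, is the uniform summable domination $q_m^{(n)}\le M^m/m!$. This is what promotes the pointwise (distributional) convergence to convergence of the factorial moments, which is exactly what parts (i) and (ii) demand; bare convergence in distribution would not be enough. Negativity guarantees that the higher-star terms $p\ge 2$ can only decrease $q_m^{(n)}$, so they are harmless for the upper bound, while the hypothesis $ne^{\alpha_n\beta_1}\to\lambda$ simultaneously forces them to vanish in the pointwise limit. Were some $\beta_p$ positive, the domination would break down and the degree could fail to be asymptotically Poisson.
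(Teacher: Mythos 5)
Your proposal is correct and takes essentially the same route as the paper: both reduce the problem, via the factorization of the measure over out-neighborhoods, to the one-dimensional tilted binomial $\binom{n}{j}e^{\alpha_n\beta_1 j+\sum_{p=2}^{k}\alpha_n\beta_p j^p/n^{p-1}}$ and compute its normalization and first two factorial moments, then invoke column symmetry. The only cosmetic difference is in the limit--sum interchange: you use dominated convergence with the summable envelope $M^m/m!$, while the paper squeezes each sum between a truncation at a fixed level $M$ (sent to infinity afterwards) and the exact closed-form binomial sum obtained by dropping the nonpositive higher-star terms.
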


\begin{proof}
By symmetry,
\begin{align}
\mathbb{P}^{(n)}(X_{1i}=1)&=\frac{1}{n}\mathbb{E}^{(n)}\left[\sum_{i=1}^{n}X_{1i}\right]
\\
&=\frac{1}{n}\frac{\sum_{j=0}^{n}\binom{n}{j}je^{\alpha_{n}\beta_{1}j+\sum_{p=2}^{k}\alpha_{n}\beta_{p}\frac{j^{p}}{n^{p-1}}}}
{\sum_{j=0}^{n}\binom{n}{j}e^{\alpha_{n}\beta_{1}j+\sum_{p=2}^{k}\alpha_n\beta_{p}\frac{j^{p}}{n^{p-1}}}}.
\nonumber
\end{align}
First, let us analyze the denominator. On one hand, for any fixed $M$,
\begin{align}
\sum_{j=0}^{n}\binom{n}{j}e^{\alpha_{n}\beta_{1}j+\sum_{p=2}^{k}\alpha_n\beta_{p}\frac{j^{p}}{n^{p-1}}}
&\geq \sum_{j=0}^M \binom {n}{j} e^{\alpha_n\beta_1 j+\sum_{p=2}^k \alpha_n\beta_p \frac{j^p}{n^{p-1}}}
\\
&\rightarrow \sum_{j=0}^M \frac{\lambda^j}{j!}.
\nonumber
\end{align}
Since this is true for any $M$, let $M \rightarrow \infty$, and we obtain an asymptotic
lower bound $\sum_{j=0}^{\infty}\frac{\lambda^j}{j!}=e^{\lambda}$. On the other hand,
\begin{align}
\sum_{j=0}^{n}\binom{n}{j}e^{\alpha_{n}\beta_{1}j+\sum_{p=2}^{k}\alpha_{n}\beta_{p}\frac{j^{p}}{n^{p-1}}}
&\leq \sum_{j=0}^{n}\binom{n}{j}e^{\alpha_{n}\beta_{1}j}
\\
&=(1+e^{\alpha_n\beta_1})^n
\nonumber
\\
&\rightarrow e^{\lambda}.
\nonumber
\end{align}
Next, let us analyze the numerator. On one hand, for any fixed $M$,
\begin{align}
\sum_{j=0}^{n}j\binom{n}{j}e^{\alpha_{n}\beta_{1}j+\sum_{p=2}^{k}\alpha_{n}\beta_{p}\frac{j^{p}}{n^{p-1}}}
&\geq \sum_{j=0}^M j\binom{n}{j}e^{\alpha_{n}\beta_{1}j+\sum_{p=2}^{k}\alpha_{n}\beta_{p}\frac{j^{p}}{n^{p-1}}}
\\
&\rightarrow \sum_{j=0}^{M-1} \frac{\lambda^{j+1}}{j!}.
\nonumber
\end{align}
Since this is true for any $M$, let $M \rightarrow \infty$, and we obtain an asymptotic
lower bound $\sum_{j=0}^{\infty} \frac{\lambda^{j+1}}{j!}=\lambda e^{\lambda}$. On the other hand,
\begin{align}
\sum_{j=0}^{n}j\binom{n}{j}e^{\alpha_{n}\beta_{1}j+\sum_{p=2}^{k}\alpha_{n}\beta_{p}\frac{j^{p}}{n^{p-1}}}
&\leq \sum_{j=0}^{n}j\binom{n}{j}e^{\alpha_{n}\beta_{1}j}
\\
&=\frac{1}{\alpha_n}\frac{\partial}{\partial \beta_1}(1+e^{\alpha_n\beta_1})^n
\nonumber
\\
&=ne^{\alpha_n\beta_1}(1+e^{\alpha_n\beta_1})^{n-1}
\nonumber
\\
&\rightarrow \lambda e^{\lambda}.
\nonumber
\end{align}

Again by symmetry,
\begin{align}
\mathbb{P}^{(n)}(X_{1i}=1,X_{1j}=1)
&=\frac{1}{n(n-1)}\left[\mathbb{E}^{(n)}\left[\left(\sum_{i=1}^{n}X_{1i}\right)^{2}\right]-\mathbb{E}^{(n)}\left[\sum_{i=1}^{n}X_{1i}\right]\right]
\\
&=\frac{1}{n(n-1)}\frac{\sum_{j=0}^{n}(j^{2}-j)\binom{n}{j}e^{\alpha_{n}\beta_{1}j+\sum_{p=2}^{k}\alpha_n\beta_{p}\frac{j^{p}}{n^{p-1}}}}
{\sum_{j=0}^{n}\binom{n}{j}e^{\alpha_{n}\beta_{1}j+\sum_{p=2}^{k}\alpha_n\beta_{p}\frac{j^{p}}{n^{p-1}}}}.
\nonumber
\end{align}
We have observed earlier that the denominator converges to $e^{\lambda}$ as $n\rightarrow \infty$. Let us analyze the numerator. On one hand, for any fixed $M$,
\begin{align}
\sum_{j=0}^{n}(j^{2}-j)\binom{n}{j}e^{\alpha_{n}\beta_{1}j+\sum_{p=2}^{k}\alpha_n\beta_{p}\frac{j^{p}}{n^{p-1}}}
&\geq \sum_{j=0}^{M}(j^{2}-j)\binom{n}{j}e^{\alpha_{n}\beta_{1}j+\sum_{p=2}^{k}\alpha_n\beta_{p}\frac{j^{p}}{n^{p-1}}}
\\
&\rightarrow \sum_{j=0}^{M-2}\frac{\lambda^{j+2}}{j!}.
\nonumber
\end{align}
Since this is true for any $M$, let $M\rightarrow \infty$, and we obtain an asymptotic lower bound $\sum_{j=0}^{\infty}\frac{\lambda^{j+2}}{j!}=\lambda^2 e^{\lambda}$.
On the other hand,
\begin{align}
&\sum_{j=0}^{n}(j^{2}-j)\binom{n}{j}e^{\alpha_{n}\beta_{1}j+\sum_{p=2}^{k}\alpha_n\beta_{p}\frac{j^{p}}{n^{p-1}}}
\\
&\leq \sum_{j=0}^n (j^2-j)\binom{n}{j} e^{\alpha_n\beta_1j}
\\
\nonumber
&=\frac{1}{\alpha_n^2}\frac{\partial^2}{\partial \beta_1^2}(1+e^{\alpha_n\beta_1})^n-\frac{1}{\alpha_n}\frac{\partial}{\partial \beta_1}(1+e^{\alpha_n\beta_1})^n
\nonumber
\\
&=n(n-1)e^{2\alpha_n\beta_1}(1+e^{\alpha_n\beta_1})^{n-2} \rightarrow \lambda^2 e^{\lambda}.
\nonumber
\end{align}

Lastly, for any fixed $j\in\mathbb{N}\cup\{0\}$,
\begin{align}
\mathbb{P}^{(n)}\left(\sum_{i=1}^{n}X_{1i}=j\right)
&=\mathbb{E}^{(n)}\left[1_{\sum_{i=1}^{n}X_{1i}=j}\right]
\\
&=\frac{\binom{n}{j}e^{\alpha_{n}\beta_{1}j+\sum_{p=2}^{k}\alpha_{n}\beta_{p}\frac{j^{p}}{n^{p-1}}}}
{\sum_{j=0}^{n}\binom{n}{j}e^{\alpha_{n}\beta_{1}j+\sum_{p=2}^{k}\alpha_n\beta_{p}\frac{j^{p}}{n^{p-1}}}},
\nonumber
\end{align}
and the denominator converges to $e^{\lambda}$ and the numerator converges to $\frac{\lambda^j}{j!}$ as $n\rightarrow \infty$.

\end{proof}

A natural follow-up question is what if the divergence rate of $\alpha_n$ is of the order of $n$ or faster? Since dependence on the rest of the parameters $\beta_2,\ldots,\beta_k$ does not diminish as $n\rightarrow \infty$, this situation is much harder to study. Some partial answers are given in the following Theorem \ref{infdir}. Notice that if $\liminf_{n\rightarrow\infty}\frac{\alpha_{n}}{n}>0$, then $\lim_{n\rightarrow\infty}ne^{\alpha_{n}\beta_{1}}=0$ is automatically satisfied.

\begin{theorem}
\label{infdir}
Assume that $\beta_{1},\ldots,\beta_{k}$ are all negative. Let us further assume that $\liminf_{n\rightarrow\infty}\frac{\alpha_{n}}{n}>\frac{\log 2}{|\beta_{1}|}$. Then
\begin{equation}
\lim_{n\rightarrow\infty}\frac{\mathbb{P}^{(n)}(X_{1i}=1)}{e^{n\alpha_{n}\sum_{p=1}^{k}\beta_{p}(\frac{1}{n})^{p}}}=1.
\end{equation}
\end{theorem}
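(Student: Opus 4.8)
The plan is to exploit the product structure that is special to the directed model. Writing $d_i=\sum_{j=1}^n X_{ij}$ for the out-degree of vertex $i$ and $f(j)=\alpha_n\sum_{p=1}^k\beta_p n^{1-p}j^p$, the alternate expression (\ref{alternate}) shows that the exponent $n^2\sum_{p=1}^k\beta_p^{(n)}s_p(X_n)$ is exactly $\sum_{i=1}^n f(d_i)$. Since distinct rows of $X_n$ involve disjoint sets of variables, the measure $\mathbb{P}^{(n)}$ factorizes as a product over rows, and the out-degree $d_1$ has the one-dimensional marginal $\mathbb{P}^{(n)}(d_1=j)=\binom{n}{j}e^{f(j)}/W_n$ with $W_n=\sum_{j=0}^n\binom{n}{j}e^{f(j)}$. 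By symmetry across columns, $\mathbb{P}^{(n)}(X_{1i}=1)=\tfrac1n\mathbb{E}^{(n)}[d_1]=\tfrac1n\big(\sum_{j=0}^n j\binom{n}{j}e^{f(j)}\big)/W_n$; noting that the target quantity $e^{n\alpha_n\sum_{p=1}^k\beta_p(1/n)^p}$ is precisely $e^{f(1)}$, the whole statement reduces to showing that this ratio of one-dimensional sums, divided by $n\,e^{f(1)}$, tends to $1$.

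First I would analyze the denominator. Its $j=0$ term is $1$, and negativity of every $\beta_p$ gives $f(j)\le\alpha_n\beta_1 j$, so $\sum_{j\ge1}\binom{n}{j}e^{f(j)}\le(1+e^{\alpha_n\beta_1})^n-1$. The hypothesis $\liminf_n\alpha_n/n>\log 2/|\beta_1|$ forces $e^{\alpha_n\beta_1}<2^{-n}$ for all large $n$, hence $n\,e^{\alpha_n\beta_1}\to0$ and $(1+e^{\alpha_n\beta_1})^n\to1$, giving $W_n\to1$.

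Next I would handle the numerator, whose $j=1$ term equals exactly $n\,e^{f(1)}$. After dividing by $n\,e^{f(1)}$, I estimate the remaining tail $\sum_{j\ge2}j\binom{n}{j}e^{f(j)}$ term by term. The key inequality is that, because $j^p\ge1$ for $j\ge1$ and every $\beta_p<0$, one has $f(j)-f(1)\le\alpha_n\beta_1(j-1)$; combined with $j\binom{n}{j}/n=\binom{n-1}{j-1}\le n^{j-1}/(j-1)!$ this yields $\frac{j\binom{n}{j}e^{f(j)}}{n\,e^{f(1)}}\le\frac{(n\,e^{\alpha_n\beta_1})^{j-1}}{(j-1)!}$. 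Summing over $j\ge2$ produces a Poisson-type tail bounded by $e^{u_n}-1$ with $u_n=n\,e^{\alpha_n\beta_1}\to0$, so the tail is $o(1)$. Dividing by $W_n\to1$ then gives $\mathbb{E}^{(n)}[d_1]/(n\,e^{f(1)})\to1$, which is the assertion.

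The main obstacle is that, unlike in Theorems \ref{meandir} and \ref{variancedir}, the higher-order star contributions $\alpha_n\beta_p n^{1-p}j^p$ with $p\ge2$ need not vanish in this regime (when $\alpha_n$ grows like $n$ they are of order one), so they cannot simply be discarded. The resolution is that negativity of all the $\beta_p$ lets these terms only decrease $f$, and the \emph{essential} point is that they drop out of the comparison $f(j)-f(1)$ in the direction I need: the tail estimate ends up depending on $\beta_2,\ldots,\beta_k$ only through the harmless inequality $f(j)-f(1)\le\alpha_n\beta_1(j-1)$. The constant $\log 2/|\beta_1|$ in the hypothesis is exactly what is needed to push $n\,e^{\alpha_n\beta_1}$ below $n\,2^{-n}$, thereby simultaneously killing $W_n-1$ and the numerator tail.
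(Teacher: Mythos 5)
Your proof is correct, and its skeleton coincides with the paper's: reduce by symmetry to $\frac{1}{n}\bigl(\sum_{j}j\binom{n}{j}e^{f(j)}\bigr)/\bigl(\sum_{j}\binom{n}{j}e^{f(j)}\bigr)$ with $f(j)=n\alpha_n\sum_{p}\beta_p(j/n)^p$, show the denominator tends to $1$ via $f(j)\le\alpha_n\beta_1 j$ and $(1+e^{\alpha_n\beta_1})^n\to1$, lower-bound the numerator by its $j=1$ term $ne^{f(1)}$, and show the $j\ge2$ tail is negligible relative to that term. The one place you genuinely diverge is the tail estimate, and your version is sharper. The paper bounds the tail crudely by $\sum_{j\ge2}j\binom{n}{j}e^{f(j)}\le 2^n n e^{2\alpha_n\beta_1}$ (using only $f(j)\le f(2)\le 2\alpha_n\beta_1$ for $j\ge2$), and it is precisely to beat the factor $2^n$ in the comparison with $ne^{f(1)}\approx ne^{\alpha_n\beta_1}$ that the hypothesis $\liminf_n\alpha_n/n>\log 2/|\beta_1|$ is invoked. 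Your termwise bound $f(j)-f(1)\le\alpha_n\beta_1(j-1)$ combined with $j\binom{n}{j}=n\binom{n-1}{j-1}\le n\cdot n^{j-1}/(j-1)!$ gives a Poisson-type tail $e^{u_n}-1$ with $u_n=ne^{\alpha_n\beta_1}$, which needs only $ne^{\alpha_n\beta_1}\to0$. So your argument proves the theorem under the weaker hypothesis $\liminf_n\alpha_n/n>0$ (indeed under $ne^{\alpha_n\beta_1}\to0$ alone), showing the specific constant $\log 2/|\beta_1|$ is an artifact of the paper's cruder counting; this is a modest but genuine strengthening.
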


\begin{proof}
By symmetry,
\begin{equation}
\mathbb{P}^{(n)}(X_{1i}=1)
=\frac{1}{n}\frac{\sum_{j=0}^{n}\binom{n}{j}je^{n\alpha_{n}\sum_{p=1}^{k}\beta_{p}(\frac{j}{n})^{p}}}
{\sum_{j=0}^{n}\binom{n}{j}e^{n\alpha_{n}\sum_{p=1}^{k}\beta_{p}(\frac{j}{n})^{p}}}.
\end{equation}
From Remark \ref{remark} (ii), the denominator converges to $1$ as $n\rightarrow \infty$. Let us analyze the numerator. On one hand,
\begin{equation}
\sum_{j=0}^{n}\binom{n}{j}je^{n\alpha_{n}\sum_{p=1}^{k}\beta_{p}(\frac{j}{n})^{p}}
\geq ne^{n\alpha_{n}\sum_{p=1}^{k}\beta_{p}(\frac{1}{n})^{p}}.
\end{equation}
On the other hand,
\begin{equation}
\frac{2^{n}ne^{2\alpha_{n}\beta_{1}}}
{ne^{n\alpha_{n}\sum_{p=1}^{k}\beta_{p}(\frac{1}{n})^{p}}}
=\frac{2^{n}e^{2\alpha_{n}\beta_{1}}}
{e^{\alpha_{n}\beta_{1}+\sum_{p=2}^{k}\alpha_{n}\frac{\beta_{p}}{n^{p-1}}}}
=e^{n[\log 2+\frac{\alpha_{n}}{n}\beta_{1}-\frac{\alpha_{n}}{n}\sum_{p=2}^{k}\frac{\beta_{p}}{n^{p-1}}]}\rightarrow 0
\end{equation}
implies that
\begin{align}
&\sum_{j=0}^{n}\binom{n}{j}je^{n\alpha_{n}\sum_{p=1}^{k}\beta_{p}(\frac{j}{n})^{p}}
\leq ne^{n\alpha_{n}\sum_{p=1}^{k}\beta_{p}(\frac{1}{n})^{p}}+\sum_{j=2}^{n}\binom{n}{j}je^{n\alpha_{n}\beta_{1}\frac{2}{n}}
\\
&\leq ne^{n\alpha_{n}\sum_{p=1}^{k}\beta_{p}(\frac{1}{n})^{p}}+ 2^{n}ne^{2\alpha_{n}\beta_{1}}\asymp ne^{n\alpha_{n}\sum_{p=1}^{k}\beta_{p}(\frac{1}{n})^{p}}.
\nonumber
\end{align}
\end{proof}

\section*{Acknowledgements}
The authors are very grateful to the anonymous referee for the invaluable suggestions that greatly improved the quality of this paper.
Mei Yin's research was partially supported by NSF grant DMS-1308333. She appreciated the opportunity to talk about this work in the 2015 ICERM workshop on Crystals, Quasicrystals and Random Networks, organized by Mark Bowick, Persi Diaconis, Charles Radin, and Peter Winkler. She thanks Lorenzo Sadun for his kind and helpful suggestions and comments.


\begin{thebibliography}{99}

\bibitem{AL}
Aldous, D. and R. Lyons. (2007).
Processes on unimodular random networks.
\textit{Electron. J. Probab.}
\textbf{12}, 1454-1508.

\bibitem{AristoffZhu}
Aristoff, D. and L. Zhu. (2014).
On the phase transition curve in a directed exponential random graph model.
\textit{arXiv:} 1404.6514.

\bibitem{AristoffZhuII}
Aristoff, D. and L. Zhu. (2015).
Asymptotic structure and singularities in constrained directed graphs.
\textit{Stochastic Process. Appl.}
\textbf{125}, 4154-4177.

\bibitem{BS}
Benjamini, I. and O. Schramm. (2001).
Recurrence of distributional limits of finite planar graphs.
\textit{Electron. J. Probab.}
\textbf{6}, 1-13.

\bibitem{Besag}
Besag, J. (1975).
Statistical analysis of non-lattice data.
\textit{J. R. Stat. Soc. Ser. D. Stat.}
\textbf{24}, 179-195.

\bibitem{BCCZ1} Borgs, C., Chayes, J., Cohn, H. and Y. Zhao. (2014).
An $L^p$ theory of sparse graph convergence I. Limits, sparse random graph models, and power law distributions.
\textit{arXiv:} 1401.2906.

\bibitem{BCCZ2} Borgs, C., Chayes, J., Cohn, H. and Y. Zhao. (2014).
An $L^p$ theory of sparse graph convergence II. LD convergence, quotients, and right convergence.
\textit{arXiv:} 1408.0744.

\bibitem{BCLSV1} Borgs, C., Chayes, J., Lov\'{a}sz, L., S\'{o}s, V.T. and K. Vesztergombi. (2006).
Counting graph homomorphisms. In: Klazar, M., Kratochvil, J.,
Loebl, M., Thomas, R., Valtr, P. (eds.) \textit{Topics in Discrete
Mathematics Volume 26}, pp. 315-371. Springer, Berlin.

\bibitem{BCLSV2} Borgs, C., Chayes, J.T., Lov\'{a}sz, L., S\'{o}s, V.T. and K. Vesztergombi. (2008).
Convergent sequences of dense graphs I. Subgraph frequencies,
metric properties and testing. \textit{Adv. Math.} \textbf{219}, 1801-1851.

\bibitem{BCLSV3} Borgs, C., Chayes, J.T., Lov\'{a}sz, L., S\'{o}s, V.T. and K. Vesztergombi. (2012).
Convergent sequences of dense graphs II. Multiway cuts and
statistical physics. \textit{Ann. of Math}. \textbf{176}, 151-219.

\bibitem{ChatterjeeDembo}
Chatterjee, S. and A. Dembo. (2014).
Nonlinear large deviations.
\textit{arXiv:} 1401.3495.

\bibitem{ChatterjeeDiaconis}
Chatterjee, S. and P. Diaconis. (2013).
Estimating and understanding exponential random graph models.
\textit{Ann. Statist.}
\textbf{41}, 2428-2461.

\bibitem{ChatterjeeVaradhan}
Chatterjee, S. and S.R.S. Varadhan. (2011).
The large deviation principle for the Erd\H{o}s-R\'{e}nyi random graph.
\textit{European. J. Combin.}
\textbf{32}, 1000-1017.

\bibitem{FienbergI}
Fienberg, S.E. (2010). Introduction to papers on the modeling and analysis of network data.
\textit{Ann. Appl. Statist.}
\textbf{4}, 1-4.

\bibitem{FienbergII}
Fienberg, S.E. (2010). Introduction to papers on the modeling and analysis of network data--II.
\textit{Ann. Appl. Statist.}
\textbf{4}, 533-534.

\bibitem{Golub}
Golub, T.R., Slonim, D.K., Tamayo, P., Huard, C., Gaasenbeek, M., Mesirov, J.P., Coller, H., Loh, M.L., Downing, J.R., Caligiuri, M.A.
and C.D. Bloomfield. (1999).
Molecular classification of cancer: Class discovery and class prediction by gene expression monitoring.
\textit{Science}
\textbf{286}, 531-537.

\bibitem{Guyon}
Guyon, I., Weston, J., Barnhill, S. and V. Vapnik. (2002).
Gene selection for cancer classification using support vector machines.
\textit{Mach. Learn.}
\textbf{46}, 389-422.

\bibitem{Hromadka}
Hrom\'{a}dka, T., DeWeese, M. and A. Zador. (2008).
Sparse representation of sounds in the unanesthetized auditory cortex.
\textit{PLoS Biol.}
\textbf{6}, e16.

\bibitem{Kenyon}
Kenyon, R., Radin, C., Ren K. and L. Sadun. (2014).
Multipodal structure and phase transitions in large constrained graphs.
\textit{arXiv:} 1405.0599.

\bibitem{KenyonYin}
Kenyon, R. and M. Yin. (2014).
On the asymptotics of constrained exponential random graphs.
\textit{arXiv:} 1406.3662.

\bibitem{Lovasz2009}
Lov\'{a}sz, L. (2009).
Very large graphs.
\textit{Current Develop. Math.}
\textbf{2008}, 67-128.

\bibitem{Lov}
Lov\'{a}sz, L. (2012).
\emph{Large Networks and Graph Limits}.
American Mathematical Society, Providence.

\bibitem{LS} Lov\'{a}sz, L. and B. Szegedy. (2006). Limits of
dense graph sequences. \textit{J. Combin. Theory Ser. B.} \textbf{96}, 933-957.

\bibitem{LubetzkyZhao}
Lubetzky, E. and Y. Zhao. (2012).
On replica symmetry of large deviations in random graphs. \textit{arXiv:} 1210.7013.

\bibitem{Newman}
Newman, M.E.J. (2010).
\textit{Networks: An Introduction}.
Oxford University Press, Oxford.

\bibitem{RadinIII}
Radin, C., Ren, K. and L. Sadun. (2014).
The asymptotics of large constrained graphs.
\textit{J. Phys. A: Math. Theor.}
\textbf{47}, 175001.

\bibitem{RadinII}
Radin, C. and L. Sadun. (2013).
Phase transitions in a complex network.
\textit{J. Phys. A: Math. Theor.}
\textbf{46}, 305002.

\bibitem{RadinIV}
Radin, C. and L. Sadun. (2015).
Singularities in the entropy of asymptotically large simple graphs.
\textit{J. Stat. Phys.}
\textbf{158}, 853-865.

\bibitem{Radin}
Radin, C. and M. Yin. (2013).
Phase transitions in exponential random graphs.
\textit{Ann. Appl. Probab.}
\textbf{23}, 2458-2471.

\bibitem{Rinaldo}
Rinaldo, A., Fienberg, S. and Y. Zhou. (2009).
On the geometry of discrete exponential families with application to exponential random graph models.
\textit{Electron. J. Stat.}
\textbf{3}, 446-484.

\bibitem{Snijders}
Snijders, T.A.B., Pattison, P., Robins, G.L. and M. Handcock. (2006).
New specifications for exponential random graph models.
\textit{Sociol. Methodol.}
\textbf{36}, 99-153.

\bibitem{Hofstad}
van der Hofstad, R. (2014).
\textit{Random Graphs and Complex Networks}.
http://www.win.tue.nl/~rhofstad/NotesRGCN.pdf.

\bibitem{Wasserman}
Wasserman, S. and K. Faust. (2010).
\textit{Social Network Analysis: Methods and Applications (Structural Analysis in the Social Sciences)}.
Cambridge University Press, Cambridge.

\bibitem{YeLiu}
Ye, J. and J. Liu. (2012).
Sparse methods for biomedical data.
\textit{SIGKDD Explor.}
\textbf{14}, 4-15.

\bibitem{Yin}
Yin, M. (2013).
Critical phenomena in exponential random graphs.
\textit{J. Stat. Phys.}
\textbf{153}, 1008-1021.

\bibitem{YinII}
Yin, M., Rinaldo, A. and S. Fadnavis. (2013).
Asymptotic quantization of exponential random graphs.
\textit{arXiv:} 1311.1738.

\bibitem{Zhu}
Zhu, L. (2014).
Asymptotic structure of constrained exponential random graph models.
\textit{arXiv:} 1408.1536.

\end{thebibliography}
\end{document}